\def\equationautorefname~#1\null{(#1)}
\def\itemautorefname~#1\null{#1}
\newcommand{\mynewthm}[3][]{%
  \newaliascnt{#2}{thmnum}%
  \newtheorem{#2}[#2]{#3}%
  \aliascntresetthe{#2}%
  \newtheorem*{#2*}{#3}%
  \expandafter\newcommand\csname #2autorefname\endcsname{#3}%
  \expandafter\renewcommand\csname the#2\endcsname{\thethmnum}%
}
\newtheorem*{clm}{Claim}
\newenvironment{clmprf}{%
  \begin{proof}[Proof of claim]%
  }{\end{proof}}
\let\xxx=\frametitle
\def\frametitle#1{%
  \xxx{%
    \setbeamercolor*{math text}{use={titlelike,my math text},fg=titlelike.fg!80!my math text.fg}%
    #1}%
  \setbeamercolor{math text}{use=my math text,fg=my math text.fg}%
}
\newcommand{\beamerenv}[3]{%
\newenvironment<>{#1}%
{%
  \setbeamercolor{temp}{structure}%
  \setbeamercolor{structure}{fg=#2}%
  \setbeamercolor{block body}{use=structure,bg=structure.fg!5!white}%
  \begin{#3}%
}%
{\end{#3}\setbeamercolor{structure}{temp}}}
\newcommand{\mynewthm}[3][green!50!black]{%
  \newtheorem*{#2x}{#3}%
  \beamerenv{#2}{#1}{#2x}%
}
\newtheorem*{clm}{Claim}
\newcommand{\myiffrench}[2]{#2}
\newcommand{\myiffrench}[2]{\iflanguage{french}{#1}{#2}}
\theoremstyle{plain}
\theoremstyle{definition}
\theoremstyle{remark}
\newcommand{\myenumlabel}[1]{\textnormal{(\roman{#1})}}
\newcounter{cycprfcnt}
\newcommand{\cycprfpreamble}%
{%
  \setcounter{cycprfcnt}{1}
  \setlength{\itemindent}{0.5\leftmargin}%
  \setlength{\leftmargin}{0pt}%
  \newcommand{\cpcurr}{\myenumlabel{cycprfcnt}}%
  \newcommand{\cpnext}{\addtocounter{cycprfcnt}{1}\cpcurr}%
  \newcommand{\cpnum}[1]{\setcounter{cycprfcnt}{##1}\cpcurr}%
  \newcommand{\cpfirst}{\cpnum{1}}%
  \newcommand{\impnext}{\cpcurr{} $\Longrightarrow$ \cpnext.}%
  \newcommand{\impfirst}{\cpcurr{} $\Longrightarrow$ \cpfirst.}%
}%
\newenvironment{cycprf}%
{\begin{list}{\impnext}%
  {\cycprfpreamble}}%
{\qedhere\end{list}}%
\newenvironment{cycprf*}%
{\begin{list}{\impnext}%
  {\cycprfpreamble}}%
{\end{list}}%
\def\indsym#1#2{%
  \setbox0=\hbox{$\m@th#1x$}%
  \kern\wd0%
  \hbox to 0pt{\hss$\m@th#1\mid$\hbox to 0pt{$\m@th#1^{#2}$\hss}\hss}%
  \lower.9\ht0\hbox to 0pt{\hss$\m@th#1\smile$\hss}%
  \kern\wd0}
\newcommand{\ind}[1][]{\mathop{\mathpalette\indsym{#1}}}
\def\nindsym#1#2{%
  \setbox0=\hbox{$\m@th#1x$}%
  \kern\wd0%
  \hbox to 0pt{\hss$\m@th#1\not$\kern1.4\wd0\hss}
  \hbox to 0pt{\hss$\m@th#1\mid$\hbox to 0pt{$\m@th#1^{#2}$\hss}\hss}%
  \lower.9\ht0\hbox to 0pt{\hss$\m@th#1\smile$\hss}%
  \kern\wd0}
\def\dotminussym#1#2{%
  \setbox0=\hbox{$\m@th#1-$}%
  \kern.5\wd0%
  \hbox to 0pt{\hss\hbox{$\m@th#1-$}\hss}%
  \raise.6\ht0\hbox to 0pt{\hss$\m@th#1.$\hss}%
  \kern.5\wd0}
\renewcommand{\emptyset}{\varnothing}
\renewcommand{\setminus}{\smallsetminus}
\DeclareMathOperator{\tp}{tp}
\DeclareMathOperator{\acl}{acl}
\DeclareMathOperator{\cl}{cl}
\begin{document}

\global\long\def\acl{\operatorname{acl}}
\global\long\def\Avg{\operatorname{Avg}}
\global\long\def\inp{\operatorname{inp}}
\global\long\def\EM{\operatorname{EM}}
\global\long\def\ist{\operatorname{ist}}
\global\long\def\M{\operatorname{\mathbb{M}}}
\global\long\def\NTP{\operatorname{NTP}}
\global\long\def\NIP{\operatorname{NIP}}
\global\long\def\TP{\operatorname{TP}}
\global\long\def\tp{\operatorname{tp}}
\global\long\def\transp{\operatorname{T}}
\global\long\def\lstp{\operatorname{L}}
\global\long\def\NSOP{\operatorname{NSOP}}
\global\long\def\bdn{\operatorname{bdn}}
\global\long\def\cl{\operatorname{cl}}
\global\long\def\dprk{\operatorname{dp-rk}}
\global\long\def\fund{\operatorname{fund}}
\global\long\def\div{\operatorname{div}}
\global\long\def\card{\operatorname{\mbox{Card}^{*}}}


\title{An independence theorem for NTP$_2$ theories}

\author{Itaï \textsc{Ben Yaacov}}

\address{Itaï \textsc{Ben Yaacov} \\
  Université Claude Bernard -- Lyon 1 \\
  Institut Camille Jordan, CNRS UMR 5208 \\
  43 boulevard du 11 novembre 1918 \\
  69622 Villeurbanne Cedex \\
  France}

\urladdr{\url{http://math.univ-lyon1.fr/~begnac/}}

\author{Artem \textsc{Chernikov}}

\address{Artem \textsc{Chernikov} \\
  Einstein Institute of Mathematics \\
  Edmond J. Safra Campus, Givat Ram \\
  The Hebrew University of Jerusalem \\
  Jerusalem, 91904 \\
  Israel
}

\urladdr{\url{http://chernikov.me}}

\thanks{First author supported by the Institut Universitaire de France}
\thanks{Second author supported by the Marie Curie Initial Training
  Network in Mathematical Logic - MALOA - From MAthematical LOgic to
  Applications, PITN-GA-2009-238381}

\begin{abstract}
  We establish several results regarding dividing and forking in $\NTP_2$ theories.

  We show that dividing is the same as array-dividing.
  Combining it with existence of strictly invariant sequences we deduce that forking satisfies the chain condition over extension bases (namely, the forking ideal is $S1$, in Hrushovski's terminology).
  Using it we prove an independence theorem over extension bases (which, in the case of simple theories, specializes to the ordinary independence theorem).
  As an application we show that Lascar strong type and compact strong type coincide over extension bases in an $\NTP_2$ theory.

  We also define the dividing order of a theory -- a generalization of Poizat's fundamental order from stable theories -- and give some equivalent characterizations under the assumption of $\NTP_2$.
  The last section is devoted to a refinement of the class of strong theories and its place in the classification hierarchy.
\end{abstract}

\maketitle

\section*{Introduction}

The class of $\NTP_2$ theories, namely theories without the tree
property of the second kind, was introduced by Shelah \cite{Shelah:SimpleUnstableTheories}
and is a natural generalization of both simple and NIP theories containing
new important examples (e.g. any ultra-product of $p$-adics is $\NTP_2$,
see \cite{Chernikov:NTP2}).

The realization that it is possible to develop a good theory of forking
in the $\NTP_2$ context came from the paper \cite{Chernikov-Kaplan:ForkingDividingNTP2}, where
it was demonstrated that the basic theory can be carried out as long
as one is working over an extension base (a set is called an extension
base if every complete type over it has a global non-forking extension,
e.g. any model or any set in a simple, o-minimal or C-minimal theory
is an extension base).

Here we establish further important properties of forking, thus demonstrating
that a large part of simplicity theory can be seen as a special case
of the theory forking in $\NTP_2$ theories.

~

In \autoref{sec: Array-dividing} we consider the notion of \emph{array
  dividing}, which is a multi-dimensional generalization of dividing.
We show that in an $\NTP_2$ theory, dividing coincides with array
dividing over an arbitrary set (thus generalizing a corresponding
result of Kim for the class of simple theories).

\autoref{sec:Chain-condition} is devoted to a property of forking
called the  \emph{chain condition}. We say that forking in $T$ satisfies
the chain condition over a set $A$ if for any $A$-indiscernible
sequence $\left(a_{i}\right)_{i\in\omega}$ and any formula $\varphi\left(x,y\right)$,
if $\varphi\left(x,a_{0}\right)$ does not fork over $A$, then $\varphi\left(x,a_{0}\right)\land\varphi\left(x,a_{1}\right)$
does not fork over $A$. This property is equivalent to requiring
that there are no anti-chains of unbounded size in the partial order
of formulas non-forking over $A$ ordered by implication (hence the
name, see \autoref{sec:Chain-condition} for more equivalences
and the history of the notion). The following question had been raised
by Adler and by Hrushovski:
\begin{qst}
  What are the implications between $\NTP_2$ and the chain condition?
\end{qst}
We resolve it by showing that:
\begin{enumerate}
\item Forking in $\NTP_2$ theories satisfies the chain condition over
  extension bases (\autoref{thm: NTP2 implies Chain Condition},
  our proof combines the equality of dividing and array-dividing with
  the existence of universal Morley sequences from \cite{Chernikov-Kaplan:ForkingDividingNTP2}).
\item There is a theory with $\TP_{2}$ in which forking satisfies the chain
  condition (\autoref{sec: Chain condition does not imply NTP2}).
\end{enumerate}
In his work on approximate subgroups, Hrushovski \cite{Hrushovski:ApproximateSubgroups}
reformulated the independence theorem for simple theories with respect
to an arbitrary invariant $S1$-ideal. In \autoref{sec: Weak-independence-theorem}
we observe that the chain condition means that the forking ideal is
$S1$. Using it we prove a independence theorem for forking over
an arbitrary extension base in an $\NTP_2$ theory (\autoref{thm: Weak Independence Theorem}),
which is a natural generalization of the independence theorem of Kim
and Pillay for simple theories. As an application we show that Lascar
type coincides with compact strong type over an extension base in
an $\NTP_2$ theory.

In \autoref{sec: Dividing-order} we discuss a possible generalization
of the fundamental order of Poizat which we call the \emph{dividing
  order}. We prove some equivalent characterizations and connections
to the existence of universal Morley sequences in the case of $\NTP_2$
theories, and make some conjectures.

In the final section we define burden$^{2}$ and strong$^{2}$ theories
(which coincide with strongly$^{2}$ dependent theories under the
assumption of NIP, just as Adler's strong theories specialize to strongly
dependent theories). We establish some basic properties of burden$^{2}$
and prove that $\NTP_2$ is characterized by the boundedness of
burden$^{2}$.

\subsection*{Preliminaries}

We assume some familiarity with the basics of forking and dividing
(e.g. \cite[Section 2]{Chernikov-Kaplan:ForkingDividingNTP2}), simple theories (e.g. \cite{Wagner:SimpleTheories})
and NIP theories (e.g. \cite{Adler:IntroductionToDependent}).

As usual, $T$ is a complete first-order theory, $\M\vDash T$ is
a monster model. We write $a\ind_{C}b$ when $\tp(a/bC)$ does not
fork over $C$ and $a\ind_{C}^{d}b$ when $\tp(a/bC)$ does not divide
over $C$. In general these relations are not symmetric. We say that
a global type $p\left(x\right)\in S\left(\M\right)$ is \emph{invariant}
(\emph{Lascar-invariant}) over $A$ if whenever $\varphi\left(x,a\right)\in p$
and $b\equiv_{A}a$ (resp. $b\equiv_{A}^{\lstp}a$, see \autoref{def: Lascar strong type}), then $\varphi\left(x,b\right)\in p$.

We use the plus sign to denote concatenation of sequences, as in $I+J$,
or $a_{0}+I+b_{1}$ and so on.
\begin{dfn}
  \label{def: NTP2} Recall that a formula $\varphi\left(x,y\right)$
  is $\TP_{2}$ if there are $\left(a_{ij}\right)_{i,j\in\omega}$ and
  $k\in\omega$ such that:
  \begin{itemize}
  \item $\left\{ \varphi\left(x,a_{ij}\right)\right\} _{j\in\omega}$ is $k$-inconsistent
    for each $i\in\omega$,
  \item $\left\{ \varphi\left(x,a_{if\left(i\right)}\right)\right\} _{i\in\omega}$
    is consistent for each $f:\,\omega\to\omega$.
  \end{itemize}
\end{dfn}
A formula is $\NTP_2$ if it is not $\TP_{2}$, and a theory $T$
is $\NTP_2$ if it implies that every formula is $\NTP_2$.

\section{\label{sec: Array-dividing} Array dividing}

For the clarity of exposition (and since this is all that we will
need) we only deal in this section with $2$-dimensional arrays. All
our results generalize to $n$-dimensional arrays by an easy induction
(or even to $\lambda$-dimensional arrays for an arbitrary ordinal
$\lambda$, by compactness; see \cite[Section 1]{BenYaacov:SimplicityInCats}).
\begin{dfn}

  \begin{enumerate}
  \item We say that $\left(a_{ij}\right)_{i,j\in\kappa}$ is an \emph{indiscernible
      array} over $A$ if both $\left(\left(a_{ij}\right)_{j\in\kappa}\right)_{i\in\kappa}$
    and $\left(\left(a_{ij}\right)_{i\in\kappa}\right)_{j\in\kappa}$
    are indiscernible sequences. Equivalently, all $n\times n$ sub-arrays
    have the same type over $A$, for all $n<\omega$. Equivalently, $\tp(a_{i_{0}j_{0}}a_{i_{0}j_{1}}...a_{i_{n}j_{n}}/A)$
    depends just on the quantifier-free types of $\left\{ i_{0},...,i_{n}\right\} $
    and $\left\{ j_{0},...,j_{n}\right\} $ in the language of order and equality. Notice that, in particular,
    $\left(a_{if(i)}\right)_{i\in\kappa}$ is an $A$-indiscernible sequence
    of the same type for any strictly increasing function $f:\,\kappa\to\kappa$.
  \item We say that an array $\left(a_{ij}\right)_{i,j\in\kappa}$ is \emph{strongly
      indiscernible} over $A$ if it is an indiscernible array over $A$,
    and in addition its rows are mutually indiscernible over $A$, i.e.
    $\left(a_{ij}\right)_{j\in\kappa}$ is indiscernible over $\left(a_{i'j}\right)_{i'\in\kappa\setminus\left\{ i\right\} ,j\in\kappa}$
    for each $i\in\kappa$.
  \end{enumerate}
\end{dfn}

\begin{dfn}
  We say that $\varphi(x,a)$ \emph{array-divides} over $A$ if there
  is an $A$-indiscernible array $\left(a_{ij}\right)_{i,j\in\omega}$
  such that $a_{00}=a$ and $\left\{ \varphi(x,a_{ij})\right\} _{i,j\in\omega}$
  is inconsistent.
\end{dfn}

\begin{dfn}
  \begin{enumerate}
  \item Given an array $\mathbf{A}=\left(a_{i,j}\right)_{i,j\in\omega}$ and $k\in\omega$, we define:
    \begin{enumerate}
    \item $\mathbf{A}^{k}=\left(a_{i,j}'\right)_{i,j\in\omega}$ with $a_{i,j}' = a_{ik,j}, a_{ik+1,j}, \ldots, a_{ik+k-1,j}$.
    \item $\mathbf{A}^{\transp}=\left(a_{j,i}\right)_{i,j\in\omega}$, namely
      the transposed array.
    \end{enumerate}
  \item Given a formula $\varphi\left(x,y\right)$, we let $\varphi^{k}\left(x,y_{0}\ldots y_{k-1}\right)=\bigwedge_{i<k}\varphi\left(x,y_{i}\right)$.
  \item Notice that with this notation $\left(\mathbf{A}^{k}\right)^{l}=\mathbf{A}^{kl}$
    and $\left(\varphi^{k}\right)^{l}=\varphi^{kl}$.
  \end{enumerate}
\end{dfn}
\begin{lem}
  \label{lem: power arrays}
  \begin{enumerate}
  \item If $\mathbf{A}$ is a $B$-indiscernible array, then $\mathbf{A}^{k}$
    (for any $k\in\omega$) and $\mathbf{A}^{\transp}$ are $B$-indiscernible
    arrays.
  \item If $\mathbf{A}$ is a strongly indiscernible array over $B$, then $\mathbf{A}^{k}$
    is a strongly indiscernible array over $B$ (for any $k\in\omega$).
  \end{enumerate}
\end{lem}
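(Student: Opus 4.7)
Both parts of the lemma are essentially verifications of the definitions, so my plan is to unpack what needs to be checked and reduce it to indiscernibility properties of $\mathbf{A}$ itself.

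For (1), the claim about $\mathbf{A}^{\transp}$ is immediate, since the definition of an indiscernible array is symmetric between rows and columns. For $\mathbf{A}^k$, I would use the equivalent formulation that the type of an $n\times n$ sub-array of an indiscernible array with strictly increasing row and column indices depends only on $n$. Given row indices $i_1<\ldots<i_n$ and column indices $j_1<\ldots<j_n$ for $\mathbf{A}^k$, the corresponding sub-array of $\mathbf{A}^k$ unfolds into a $kn\times n$ sub-array of $\mathbf{A}$ at the strictly increasing rows $i_1k,\,i_1k+1,\ldots,i_nk+k-1$ and columns $j_1<\ldots<j_n$. By indiscernibility of $\mathbf{A}$ its type depends only on $n$ and $k$, which is the required invariance.

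For (2), the additional content to verify is mutual row-indiscernibility of $\mathbf{A}^k$. Fix $i$; the $i$-th row of $\mathbf{A}^k$ is the sequence of $k$-tuples $\bigl((a_{ik+l,j})_{l<k}\bigr)_{j\in\omega}$, and we must show this is indiscernible over $B\cup\{a_{i',j}:i'\notin[ik,\,ik+k),\,j\in\omega\}$. To compare the type at positions $j_1<\ldots<j_m$ with that at positions $j'_1<\ldots<j'_m$, I would proceed one sub-row at a time: for $l=0,\ldots,k-1$ in turn, apply the fact that $(a_{ik+l,j})_j$ is indiscernible over $B$ together with \emph{all} the remaining rows of $\mathbf{A}$ (this is mutual row-indiscernibility of $\mathbf{A}$, coming from its strong indiscernibility), and use it to swap positions $j_n\mapsto j'_n$ inside sub-row $ik+l$ alone while leaving the other sub-rows untouched. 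After $k$ such moves one reaches the desired type, giving mutual indiscernibility.

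The only potential obstacle is the bookkeeping in (2): at each of the $k$ swaps one must check that the parameters already fixed lie inside the base over which the relevant row of $\mathbf{A}$ is indiscernible. This is painless, because that base is the union of all other rows of $\mathbf{A}$ together with $B$, and hence contains every parameter that can appear in the argument, whether at positions $j_n$ or $j'_n$.
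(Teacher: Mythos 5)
The paper states this lemma without proof, treating it as a routine unwinding of the definitions, and your argument is a correct and natural way to carry out that verification. One small imprecision worth noting: for part (1) you invoke the "$n\times n$ sub-array" characterization, but what you actually produce from a square sub-array of $\mathbf{A}^k$ is a rectangular $kn\times n$ sub-array of $\mathbf{A}$; this is harmless, since the rectangular version follows immediately (extend to a $kn\times kn$ square and restrict columns, or use the paper's formulation in terms of quantifier-free order types of the index sets, which already allows repeats). The column-by-column swapping argument in part (2) is exactly right: at each stage the row of $\mathbf{A}$ being swapped is indiscernible over $B$ together with all other rows, and this base contains $C$ as well as every parameter from the other sub-rows $ik+l'$ (whether at the old or new column positions), so each swap is legal and after $k$ swaps the two types coincide over $C$.
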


\begin{lem}
  \label{lem: very indiscernible array is consistent} Assume that $T$
  is $\NTP_2$ and let $\left(a_{ij}\right)_{i,j\in\omega}$ be a
  strongly indiscernible array. Assume that the first column $\left\{ \varphi\left(x,a_{i0}\right)\right\} _{i\in\omega}$
  is consistent. Then the whole array $\left\{ \varphi\left(x,a_{ij}\right)\right\} _{i,j\in\omega}$
  is consistent.
\end{lem}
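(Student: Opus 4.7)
The plan is to argue by contradiction and extract a witness of $\TP_{2}$ for a suitable power of $\varphi$, contradicting $\NTP_{2}$. Suppose the full array $\{\varphi(x,a_{ij})\}_{i,j\in\omega}$ is inconsistent. By compactness there exist $N,M\in\omega$ such that the finite sub-rectangle $\{\varphi(x,a_{ij})\}_{i<N,\,j<M}$ is already inconsistent. The natural move is to group $N$ consecutive rows of $\mathbf{A}$ into a single row by passing to $\mathbf{A}^{N}$, and to apply $\NTP_{2}$ not to $\varphi$ itself but to the grouped formula $\varphi^{N}(x,y_{0}\dots y_{N-1}) = \bigwedge_{l<N}\varphi(x,y_{l})$.

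By \autoref{lem: power arrays} the array $\mathbf{A}^{N}$ is still strongly indiscernible. For the ``rows $k$-inconsistent'' clause of $\TP_{2}$: the first $M$ entries of the first row of $\mathbf{A}^{N}$ satisfy $\bigwedge_{j<M}\varphi^{N}(x,a_{0,j}') = \bigwedge_{i<N,\,j<M}\varphi(x,a_{ij})$, which is inconsistent by the choice of $N,M$; by indiscernibility of the rows of $\mathbf{A}^{N}$, every row of $\mathbf{A}^{N}$ is $M$-inconsistent with respect to $\varphi^{N}$.

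For the ``paths consistent'' clause I need to verify that $\{\varphi^{N}(x,a_{i,f(i)}')\}_{i\in\omega}$ is consistent for every $f\colon\omega\to\omega$. This is where strong indiscernibility does the real work: shifting column indices one super-row at a time, one may replace $0$ by $f(i)$ in super-row $i$, using the mutual indiscernibility of the $N$ rows of $\mathbf{A}$ that make up super-row $i$ over the union of all the other rows. This yields $(a_{i,f(i)}')_{i\in\omega} \equiv (a_{i,0}')_{i\in\omega}$, so such a path is equiconsistent with the first column of $\mathbf{A}^{N}$ under $\varphi^{N}$, which unwinds to $\{\varphi(x,a_{k,0})\}_{k\in\omega}$ --- consistent by hypothesis.

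Putting the two clauses together, $\varphi^{N}$ has $\TP_{2}$ witnessed by $\mathbf{A}^{N}$, contradicting $\NTP_{2}$. The only point needing care is the super-row column shift that identifies an arbitrary path with the first column; the remainder is a routine compactness-and-indiscernibility manipulation.
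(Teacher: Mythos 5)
Your proof is correct and follows essentially the same route as the paper's: pass to the grouped array $\mathbf{A}^N$ and formula $\varphi^N$ (invoking \autoref{lem: power arrays} to keep strong indiscernibility), then use the mutual indiscernibility of rows to move any path onto the first column and read off a $\TP_{2}$ witness. The only cosmetic difference is that you argue directly by contradiction from a finite inconsistent $N\times M$ sub-rectangle, whereas the paper fixes $k$ rows and shows some (hence the first) super-row is consistent --- the underlying combinatorics are identical.
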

\begin{proof}
  Let $\varphi\left(x,y\right)$ and a strongly indiscernible array $\mathbf{A}=\left(a_{ij}\right)_{i,j\in\omega}$
  be given. By compactness, it is enough to prove that $\left\{ \varphi\left(x,a_{ij}\right)\right\} _{i<k,j\in\omega}$
  is consistent for every $k\in\omega$. So fix some $k$, and let $\mathbf{A}^{k}=\left(b_{ij}\right)_{i,j\in\omega}$
  --- it is still a strongly indiscernible array by \autoref{lem: power arrays}.
  Besides $\left\{ \varphi^{k}\left(x,b_{i0}\right)\right\} _{i\in\omega}$
  is consistent. But then $\left\{ \varphi^{k}\left(x,b_{ij}\right)\right\} _{j\in\omega}$
  is consistent for some $i\in\omega$ (as otherwise $\varphi^{k}$
  would have $\TP_{2}$ by the mutual indiscernibility of rows), thus
  for $i=0$ (as the sequence of rows is indiscernible). Unwinding,
  we conclude that $\left\{ \varphi\left(x,a_{ij}\right)\right\} _{i<k,j\in\omega}$
  is consistent.
\end{proof}

\begin{lem}
  \label{lem: Consistent power diagonal}
  Assume that $T$ is $\NTP_2$ and let $\mathbf{A}=\left(a_{ij}\right)_{i,j\in\omega}$
  be an indiscernible array and assume that the diagonal $\left\{ \varphi\left(x,a_{ii}\right)\right\} _{i\in\omega}$
  is consistent. Then for any $k\in\omega$, if $\mathbf{A}^{k}=\left(b{}_{ij}\right)_{i,j\in\omega}$
  then the diagonal $\left\{ \varphi^{k}\left(x,b_{ii}\right)\right\} _{i\in\omega}$
  is consistent.
\end{lem}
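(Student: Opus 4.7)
The plan is to reduce to the previous lemma (``Very indiscernible array is consistent'') by passing to a strongly indiscernible array and exploiting mutual indiscernibility via a path-swapping trick.

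\smallskip

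\textbf{Step 1 (Extraction).} First I would extract from $\mathbf{A}$ a strongly indiscernible array $\mathbf{A}' = (a'_{ij})$ by a standard Ramsey/compactness construction, arranged so that the finite sub-array types relevant to us are preserved. Concretely, for each $N$, $(a'_{ii})_{i<N} \equiv (a_{ii})_{i<N}$ and $(a'_{ki+l,i})_{i<N,\,l<k} \equiv (a_{ki+l,i})_{i<N,\,l<k}$. In particular the diagonal $\{\varphi(x, a'_{ii})\}_i$ remains consistent.

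\smallskip

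\textbf{Step 2 (Path argument).} In the strongly indiscernible $\mathbf{A}'$, mutual indiscernibility of rows implies that any two ``paths'' $(a'_{i, f(i)})_i$ and $(a'_{i, g(i)})_i$ have the same type. Indeed, one changes $f$ into $g$ coordinate by coordinate: at each step the substitution $a'_{i_0, f(i_0)} \rightsquigarrow a'_{i_0, g(i_0)}$ preserves the type of the surrounding tuple, since these two elements have the same type over the remaining coordinates (all lying in rows $\ne i_0$) by the mutual indiscernibility of row $i_0$. Consequently the diagonal $(a'_{ii})_i$ and the first column $(a'_{i0})_i$ have the same type in $\mathbf{A}'$, so the first column is $\varphi$-consistent (by Step 1 the diagonal still is).

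\smallskip

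\textbf{Step 3 (Apply Lemma, then transfer).} Since $\mathbf{A}'$ is strongly indiscernible with $\varphi$-consistent first column, the previous lemma yields that $\{\varphi(x, a'_{ij})\}_{i,j\in\omega}$ is consistent. In particular the staircase $\{\varphi(x, a'_{ki+l,i}) : i\in\omega,\,0\le l<k\}$ is consistent. By Step 1 and compactness, $\{\varphi(x, a_{ki+l,i}) : i\in\omega,\,0\le l<k\}$ is consistent in $\mathbf{A}$, which, unwinding definitions, is exactly the required consistency of $\{\varphi^k(x, b_{ii})\}_i$.

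\smallskip

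The main obstacle is the extraction step: one must arrange the strongly indiscernible $\mathbf{A}'$ so that the two finite type data---the diagonal (to transfer the hypothesis \emph{into} $\mathbf{A}'$) and the staircase (to transfer the conclusion \emph{out of} $\mathbf{A}'$)---are simultaneously preserved. This is a delicate but standard application of Ramsey/compactness, and the text's reference to the $n$-dimensional generalisation via \cite{BenYaacov:SimplicityInCats} suggests that the natural framework for such an extraction is higher-dimensional indiscernibility.
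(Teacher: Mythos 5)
Your overall strategy (pass to a strongly indiscernible array, apply \autoref{lem: very indiscernible array is consistent}, transfer back) matches the paper's, and Steps 2--3 are sound in isolation. The gap is in Step~1: the two type-preservation requirements you impose on $\mathbf{A}'$ are incompatible in general, so the ``delicate but standard'' extraction you appeal to does not exist. Here is why. In any strongly indiscernible array $\mathbf{A}'$, mutual indiscernibility of rows lets you move each element within its own row without changing the type over the elements sitting in the other rows; since the staircase $\left(a'_{ki+l,i}\right)_{i<N,\,l<k}$ and the diagonal $\left(a'_{mm}\right)_{m<Nk}$ each pick exactly one element from each of rows $0,\dots,Nk-1$, both have the same type as the first column $\left(a'_{m0}\right)_{m<Nk}$, hence as each other. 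So your conditions (a) ``diagonal of $\mathbf{A}'$ $\equiv$ diagonal of $\mathbf{A}$'' and (b) ``staircase of $\mathbf{A}'$ $\equiv$ staircase of $\mathbf{A}$'' together force the diagonal and staircase of $\mathbf{A}$ to have the same type. But in a mere indiscernible array they generally do not: the diagonal has strictly increasing column indices while the staircase repeats each column index $k$ times. (Concretely, take $T$ to be infinitely many infinite classes of an equivalence relation $E$ and $a_{ij}$ with $E\left(a_{ij},a_{i'j'}\right)\Leftrightarrow j=j'$: the diagonal is pairwise $E$-inequivalent while the staircase comes in $E$-classes of size $k$.)

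The correct target is not the staircase of $\mathbf{A}'$ but its $N\times k$ rectangular block $\left(a'_{ij}\right)_{i<N,\,j<k}$, whose type in a strongly indiscernible array is \emph{not} tied to the first-column type (the block takes $k$ elements from each row, so mutual indiscernibility gives no collapse). One must then produce $\mathbf{A}'$ strongly indiscernible with (a) diagonal of $\mathbf{A}'$ having the type of the diagonal of $\mathbf{A}$ and (b') the $N\times k$ block of $\mathbf{A}'$ having the type of the staircase of $\mathbf{A}$. This is exactly what the paper's explicit construction delivers: extend $\mathbf{A}$ to $\left(a_{ij}\right)_{i\in\omega\times\omega,\,j\in\omega}$ and set $b_{ij}=a_{i\omega+j,\,i}$. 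Then $\left(b_{ij}\right)$ is strongly indiscernible, its diagonal $\left(a_{i\omega+i,\,i}\right)_i$ is order-isomorphic (in the sense of array indices) to the diagonal of $\mathbf{A}$, and its $N\times k$ block $\left(a_{i\omega+j,\,i}\right)_{i<N,\,j<k}$ is order-isomorphic to the staircase $\left(a_{ki+l,\,i}\right)_{i<N,\,l<k}$. So your Steps~2 and~3 go through once the staircase of $\mathbf{A}'$ is replaced by its rectangular block and Step~1 is replaced by this concrete re-indexing; appealing to a generic Ramsey extraction cannot repair the argument as you stated it.
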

\begin{proof}
  By compactness we can extend our array $\mathbf{A}$ to $\left(a_{ij}\right)_{i\in\omega\times\omega,j\in\omega}$
  and let $b_{ij}=a_{i\times\omega+j,i}$.

  It then follows that $\left(b_{ij}\right)_{i,j\in\omega}$ is a strongly
  indiscernible array and that $\left\{ \varphi\left(x,b_{i0}\right)\right\} _{i\in\omega}$
  is consistent. But then $\left\{ \varphi\left(x,b_{ij}\right)\right\} _{i,j\in\omega}$
  is consistent by \autoref{lem: very indiscernible array is consistent}
  , and we can conclude by indiscernibility of $\mathbf{A}$.

  \begin{figure}[h]
    \includegraphics[scale=0.5]{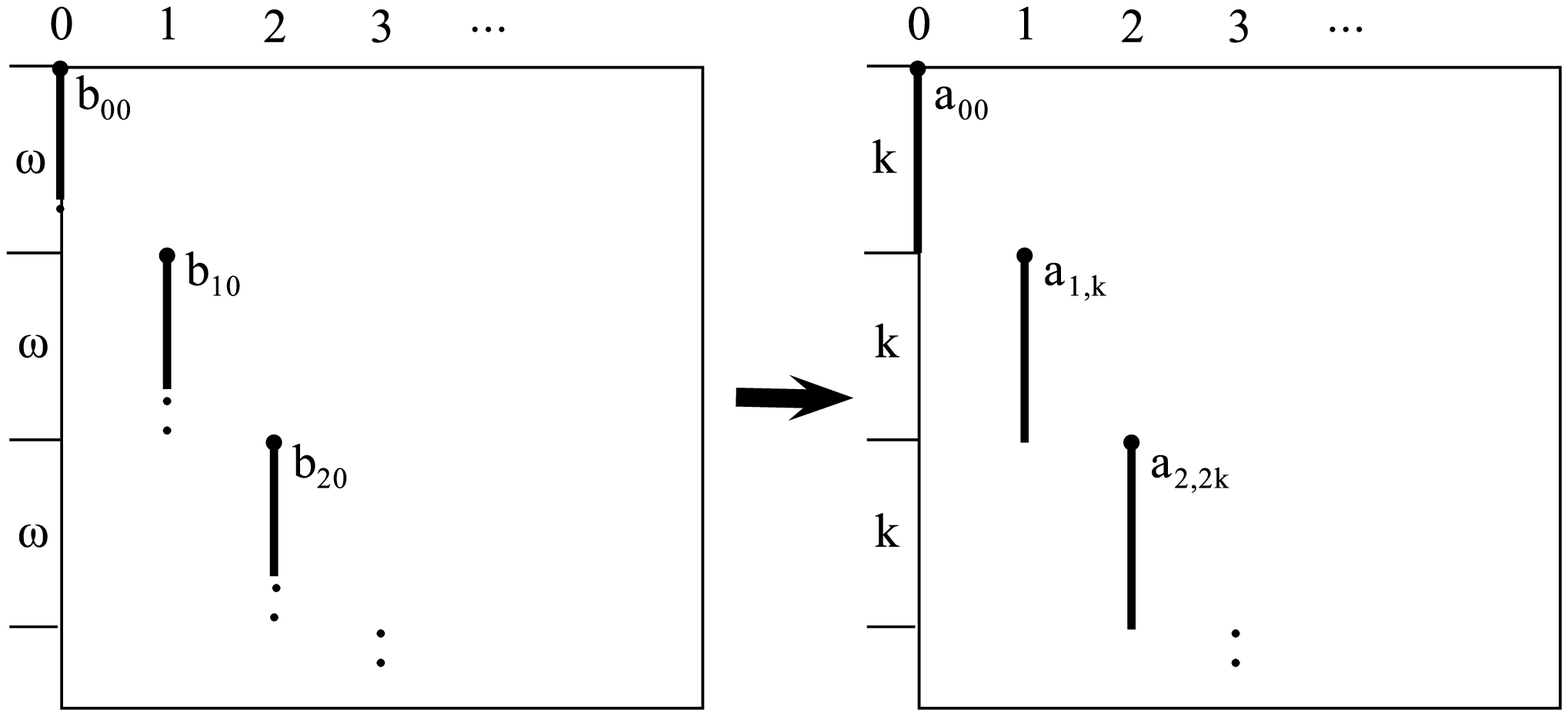}
  \end{figure}
\end{proof}

\begin{prp}
  \label{prop: consistent diagonal implies consistent array} Assume
  $T$ is $\NTP_2$. If $\left(a_{ij}\right)_{i,j\in\omega}$ is an
  indiscernible array and the diagonal $\left\{ \varphi(x,a_{ii})\right\} _{i\in\omega}$
  is consistent, then the whole array $\left\{ \varphi(x,a_{ij})\right\} _{i,j\in\omega}$
  is consistent. Moreover, this property characterizes $\NTP_2$.
\end{prp}
\begin{proof}
  Let $\kappa\in\omega$ be arbitrary. Let $\mathbf{A}^{k}=\left(b_{ij}\right)_{i,j\in\omega}$,
  then its diagonal $\left\{ \varphi^{k}\left(x,b_{ii}\right)\right\} _{i\in\omega}$
  is consistent by \autoref{lem: Consistent power diagonal}. As $\mathbf{B}=\left(\mathbf{A}^{k}\right)^{T}$
  has the same diagonal, using \autoref{lem: Consistent power diagonal}
  again we conclude that if $\mathbf{B}^{k}=\left(c_{ij}\right)_{i,j\in\omega}$,
  then its diagonal $\left\{ \varphi^{k^{2}}\left(x,c_{ii}\right)\right\} _{i\in\omega}$
  is consistent. In particular $\left\{ \varphi\left(x,a_{ij}\right)\right\} _{i,j<k}$
  is consistent. Conclude by compactness.

  \begin{figure}[h]
    \includegraphics[scale=0.5]{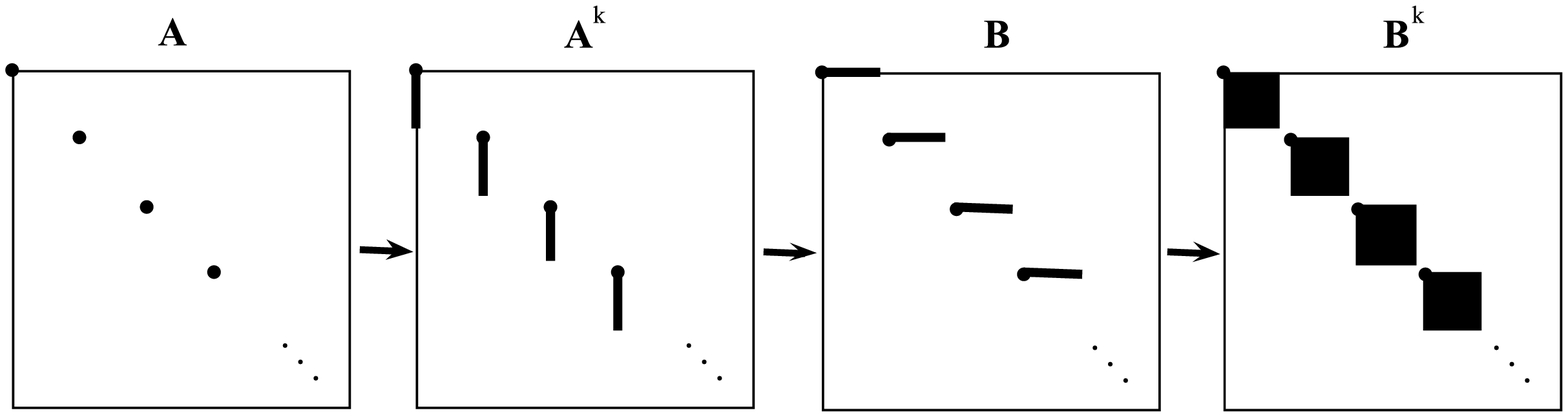}
  \end{figure}

  ``Moreover'' follows from the fact that if $T$ has $\TP_{2}$,
  then there is a strongly indiscernible array witnessing this.
\end{proof}

\begin{cor}
  \label{cor: dividing =00003D array dividing} Let $T$ be $\NTP_2$.
  Then $\varphi(x,a)$ divides over $A$ if and only if it array-divides
  over $A$.
\end{cor}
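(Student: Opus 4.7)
The plan is to derive this as a straightforward consequence of \autoref{prop: consistent diagonal implies consistent array}, once we match up array-dividing with the diagonal-vs-array dichotomy in that proposition.

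For the easy direction, I do not expect to need $\NTP_2$ at all. If $\varphi(x,a)$ divides over $A$, pick an $A$-indiscernible sequence $(a_{j})_{j\in\omega}$ with $a_{0}=a$ and $\{\varphi(x,a_{j})\}_{j\in\omega}$ inconsistent, and set $a_{ij}:=a_{j}$ (the constant-in-$i$ array whose every row is a copy of $(a_{j})_{j}$). This is trivially an $A$-indiscernible array in the sense of the definition: all rows are literally the same $A$-indiscernible sequence, all columns are constant sequences, and the type of any finite subarray is determined by the column indices via the original sequence's indiscernibility. Since $a_{00}=a$ and $\{\varphi(x,a_{ij})\}_{i,j}=\{\varphi(x,a_{j})\}_{j}$ is inconsistent, $\varphi(x,a)$ array-divides.

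For the substantive direction, suppose $\varphi(x,a)$ array-divides over $A$, witnessed by an $A$-indiscernible array $(a_{ij})_{i,j\in\omega}$ with $a_{00}=a$. The key observation is that the diagonal $(a_{ii})_{i\in\omega}$ is an $A$-indiscernible sequence whose type over $A$ is the same as the type of the first row $(a_{0j})_{j\in\omega}$ (indeed, by the equivalent characterization of indiscernible arrays recalled in the definition, $\tp(a_{i_{0}i_{0}},\ldots,a_{i_{n}i_{n}}/A)$ depends only on the quantifier-free types of $\{i_{0},\ldots,i_{n}\}$ as a subset of an ordered set, exactly like the first row). In particular, $a_{00}$ is the first element of the diagonal. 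Now apply the contrapositive of \autoref{prop: consistent diagonal implies consistent array}: since the whole array is $\varphi$-inconsistent, the diagonal must be $\varphi$-inconsistent as well. Thus the diagonal provides an $A$-indiscernible sequence beginning with $a$ witnessing that $\varphi(x,a)$ divides over $A$.

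There is no real obstacle here; the proof is essentially a one-line invocation of the preceding proposition, together with the remark that the diagonal of an indiscernible array is an indiscernible sequence of the same type as a row. The ``moreover'' clause of \autoref{prop: consistent diagonal implies consistent array} even makes clear that $\NTP_2$ is used in exactly the right place, so no hypothesis can be weakened.
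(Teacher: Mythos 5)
Your proof is correct and is essentially the paper's argument read in contrapositive form: the diagonal of a witnessing array is an $A$-indiscernible sequence starting with $a$, and \autoref{prop: consistent diagonal implies consistent array} transfers inconsistency of the whole array to inconsistency of the diagonal (the paper phrases the same step as ``non-dividing forces the diagonal to be consistent, hence the array is consistent''). One incidental slip: the diagonal of an indiscernible array need \emph{not} have the same EM type as a row --- take $a_{ij} = c_i d_j$ where $(c_i)_{i}$ and $(d_j)_{j}$ are mutually indiscernible sequences; the diagonal then sees $c_0\neq c_1$ while the first row sees $c_0=c_0$. Fortunately you never actually use that claim: all you need is that the diagonal is an $A$-indiscernible sequence with first entry $a_{00}=a$, which is exactly what the definition of an indiscernible array (the remark about $(a_{if(i)})_i$ for increasing $f$) guarantees.
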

\begin{proof}
  If $\left(a_{ij}\right)_{i,j\in\omega}$ is an $A$-indiscernible
  array with $a_{00}=a$, then $\left\{ \varphi(x,a_{ii})\right\} _{i\in\omega}$
  is consistent since $\left(a_{ii}\right)_{i\in\omega}$ is indiscernible
  over $A$ and $\varphi(x,a)$ does not divide over $A$, apply \autoref{prop: consistent diagonal implies consistent array}.
\end{proof}
\begin{rmk}
  Array dividing was apparently first considered for the purposes of
  classification of Zariski geometries in \cite{Hrushovski-Zilber:ZariskiGeometries}.
  Kim \cite{Kim:PhD} proved that in simple theories dividing equals
  array dividing. Later the first author used it to develop the basics
  of simplicity theory in the context of compact abstract theories \cite{BenYaacov:SimplicityInCats},
  and Adler used it in his presentation of thorn-forking in \cite{Adler:ThornForking}.
\end{rmk}

\section{\label{sec:Chain-condition} The chain condition}

\subsection{The chain condition}
\begin{dfn}
  We say that forking in $T$ satisfies the \emph{chain condition} over
  $A$ if whenever $I=\left(a_{i}\right)_{i\in\omega}$ is an indiscernible
  sequence over $A$ and $\varphi(x,a_{0})$ does not fork over $A$,
  then $\varphi(x,a_{0})\land\varphi(x,a_{1})$ does not fork over $A$.
  It then follows that $\left\{ \varphi(x,a_{i})\right\} _{i\in\omega}$
  does not fork over $A$.
\end{dfn}
\begin{lem}
  \label{lem: equivalents of the chain condition} The following are
  equivalent for any theory $T$ and a set $A$:
  \begin{enumerate}
  \item Forking in $T$ satisfies the chain condition over $A$.
  \item
    \label{item:ChainConditionExplicit}
    Let $\kappa = (2^{|T|+|A|})^+$.
    Then for every $p(x)\in S(A)$, whenever $\left(p(x)\cup\left\{ \varphi_{i}(x,a_{i})\right\} \right)_{i<\kappa}$ is a family of partial types non-forking over $A$, there are $i<j<\kappa$ such that $p(x)\cup\left\{ \varphi_{i}(x,a_{i})\right\} \cup\left\{ \varphi_{j}(x,a_{j})\right\} $ does not fork over $A$.
  \item
    \label{item:ChainCondition}
    The previous item holds for \emph{some} $\kappa$.
    In other words, there are no anti-chains of unbounded size in the partial order of non-forking types over $A$.
  \item If $b\ind_{A}a_{0}$ and $I=\left(a_{i}\right)_{i\in\omega}$ is indiscernible
    over $A$, then there is $I'\equiv_{Aa_{0}}I$, indiscernible over
    $Ab$ and such that $b\ind_{A}I'$.
  \end{enumerate}
\end{lem}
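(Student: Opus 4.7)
My plan is to establish the cycle $\text{(i)} \Rightarrow \text{(ii)} \Rightarrow \text{(iii)} \Rightarrow \text{(i)}$ together with $\text{(i)} \Leftrightarrow \text{(iv)}$; the implication $\text{(ii)} \Rightarrow \text{(iii)}$ is vacuous.

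For $\text{(iii)} \Rightarrow \text{(i)}$ I argue contrapositively. Given $A$-indiscernible $(a_i)_{i < \omega}$ and $\varphi$ such that $\varphi(x, a_0)$ does not fork over $A$ but $\varphi(x, a_0) \wedge \varphi(x, a_1)$ does, extend $(a_i)$ by compactness to an $A$-indiscernible sequence of arbitrary length $\kappa$. Pick $c \vDash \varphi(x, a_0)$ with $c \ind_A a_0$ and let $p := \tp(c/A)$. $A$-conjugating $c$ shows each $p \cup \{\varphi(x, a_i)\}$ does not fork over $A$, whereas for $i \neq j$ the partial type $p \cup \{\varphi(x, a_i), \varphi(x, a_j)\}$ implies $\varphi(x, a_i) \wedge \varphi(x, a_j)$, which forks over $A$ by indiscernibility; this is an antichain of size $\kappa$.

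For $\text{(i)} \Rightarrow \text{(ii)}$, pigeonhole on the at most $|T|$ formulas and the at most $2^{|T|+|A|}$ types $\tp(a_i/A) \in S(A)$ reduces us to a single $\varphi$ and a common $q \in S(A)$. A standard indiscernible extraction then yields an $A$-indiscernible subsequence $(a_{i_n})_{n < \omega}$, and for each $\psi \in p$ applying (i) to $\chi(x, y) := \psi(x) \wedge \varphi(x, y)$ shows that $\chi(x, a_{i_0}) \wedge \chi(x, a_{i_1})$ does not fork over $A$; since this holds for every $\psi \in p$, the partial type $p \cup \{\varphi(x, a_{i_0}), \varphi(x, a_{i_1})\}$ does not fork over $A$.

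For $\text{(iv)} \Rightarrow \text{(i)}$, pick $b \vDash \varphi(x, a_0)$ with $b \ind_A a_0$ and apply (iv) to obtain $I' = (a_i')$ with $a_0' = a_0$, $I' \equiv_{Aa_0} I$, $Ab$-indiscernible, and $b \ind_A I'$. Then $Ab$-indiscernibility gives $b \vDash \varphi(x, a_1')$, and $b \ind_A I'$ shows that $\varphi(x, a_0) \wedge \varphi(x, a_1')$ does not fork over $A$; the $A$-conjugacy $(a_0, a_1) \equiv_A (a_0, a_1')$ transfers this to the original pair. For $\text{(i)} \Rightarrow \text{(iv)}$, set $q(x, y) := \tp(ba_0/A)$ and $\Pi(x) := \bigcup_{i < \omega} q(x, a_i)$; as in $\text{(i)} \Rightarrow \text{(ii)}$, the chain condition shows $\Pi$ is consistent and non-forking over $A$. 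Extending $I$ to a very long $A$-indiscernible $J = (a_i)_{i < \kappa}$, the analogous union over $J$ remains non-forking and so extends to a complete non-forking type over $AJ$, realized by some $b^*$ with $b^* \ind_A J$. Extracting an $Ab^*$-indiscernible $\omega$-subsequence $I'' \subseteq J$ preserves both $b^* \ind_A I''$ and $\tp(b^*/Aa_{i_n}) = q$ for every $n$; then an $A$-automorphism sending the first element of $I''$ to $a_0$, followed by an $Aa_0$-automorphism sending the transported $b^*$ to $b$, produces the desired $I'$.

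The main obstacle is $\text{(i)} \Rightarrow \text{(iv)}$, where the indiscernible extraction must be set up so that the resulting $I''$ is simultaneously $Ab^*$-indiscernible, independent from $b^*$ over $A$, and retains $\tp(b^*/Aa_{i_n}) = q$ for every $n$; and the two successive automorphisms must be chosen so that both the first element of the sequence and the accompanying realization are transported to $a_0$ and $b$ without disturbing indiscernibility, independence, or the matching of $q$.
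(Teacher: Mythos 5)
Your decomposition differs mildly from the paper's (the paper closes a single cycle $\text{(1)}\Rightarrow\text{(2)}\Rightarrow\text{(3)}\Rightarrow\text{(4)}\Rightarrow\text{(1)}$, while you run $\text{(i)}\Rightarrow\text{(ii)}\Rightarrow\text{(iii)}\Rightarrow\text{(i)}$ and treat (iv) as a side loop), but the content of the (iii)$\Rightarrow$(i), (i)$\Rightarrow$(iv) and (iv)$\Rightarrow$(i) arguments matches the paper's (3)$\Rightarrow$(4)$\Rightarrow$(1) essentially step for step, just repackaged. Those parts are fine.

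There is, however, a genuine gap in your $\text{(i)}\Rightarrow\text{(ii)}$. After pigeonholing you are left with $\kappa=(2^{|T|+|A|})^+$ parameters $a_i$, all of the same type over $A$, and you claim that ``a standard indiscernible extraction'' produces an $A$-indiscernible \emph{subsequence} $(a_{i_n})_{n<\omega}$. That is false at this cardinal: extracting an $A$-indiscernible ($\omega$-)subsequence via Erd\H{o}s--Rado requires the sequence to have length on the order of $\beth_{(2^{|T|+|A|})^+}$, vastly larger than $(2^{|T|+|A|})^+$. What you actually need is much weaker --- merely that \emph{two} of the $a_i$ lie on a common $A$-indiscernible sequence --- but even this is not free, and the tight bound $(2^{|T|+|A|})^+$ is a specific (non-obvious) lemma: the paper cites it from Casanovas, ``in every set $S$ of tuples of size $\lambda$, if $|S|>2^{\lambda+|T|}$ then some two distinct elements of $S$ appear together in an indiscernible sequence.'' Without invoking this (or reproving it), your argument only establishes $\text{(i)}\Rightarrow\text{(iii)}$ for some sufficiently large $\kappa$, not $\text{(i)}\Rightarrow\text{(ii)}$ with the stated $\kappa$; and since $\text{(iii)}\Rightarrow\text{(ii)}$ is not itself immediate, the specific bound in (ii) is left unjustified. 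The rest of your write-up --- in particular the careful bookkeeping in $\text{(i)}\Rightarrow\text{(iv)}$ with the two automorphisms $\sigma$ and $\tau$ --- is correct, and the ``main obstacle'' you flag there is in fact handled by what you wrote.
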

\begin{proof}
  \begin{cycprf}
  \item Follows from the fact that in every set $S$ with
    elements of size $\lambda$, if $\left|S\right|>2^{\lambda+\left|T\right|}$
    then some two different elements appear in an indiscernible sequence
    (see e.g. \cite[Proposition 3.3]{Casanovas:Dividing}).
  \item Obvious.
  \item
    We may assume that $I$ is of length $\kappa$, long
    enough. Let $p(x,a_{0})=\tp(b/a_{0}A)$. It follows from \autoref{item:ChainCondition} by compactness
    that $\bigcup_{i<\kappa}p(x,a_{i})$ does not fork over $A$. Then
    there is $b'$ realizing it, such that in addition $b'\ind_{A}I$.
    By Ramsey, automorphism and compactness we find an $I'$ as wanted.
  \item[\impfirst]
    Assume that the chain condition fails, let $I$ and $\varphi(x,y)$
    witness this, so $\varphi(x,a_{0})\land\varphi(x,a_{1})$ forks over
    $A$. Let $b\vDash\varphi(x,a_{0})\land\varphi(x,a_{1})$. It is
    clearly not possible to find $I'$ as in (4).
  \end{cycprf}
\end{proof}
\begin{rmk}
  The term ``chain condition'' refers to \autoref{lem: equivalents of the chain condition}\autoref{item:ChainCondition}
  interpreted as saying that there are no antichains of unbounded size
  in the partial order of non-forking formulas (ordered by implication).
  The chain condition was introduced and proved by Shelah with respect
  to weak dividing, rather than dividing, for simple theories in the
  form of \autoref{item:ChainConditionExplicit} in \cite{Shelah:SimpleUnstableTheories}. Later \cite[Theorem~4.9]{Grossberg-Iovino-Lessman:SimplePrimer}
  presented a proof due to Shelah of the chain condition with respect
  to dividing for simple theories using the independence theorem, again
  in the form of \autoref{item:ChainConditionExplicit}. The chain condition as defined here was proved
  for simple theories by Kim \cite{Kim:PhD}. It was further studied
  by Dolich \cite{Dolich:WeakDividing}, Lessmann \cite{Lessmann:CountingPartialTypes}, Casanovas
  \cite{Casanovas:Dividing} and Adler \cite{Adler:Chain} establishing the equivalence
  of the first three forms. In the case of $\NIP$ theories, the chain condition
  follows immediately from the fact that non-forking is equivalent to
  Lascar-invariance (see \autoref{lem: invariance satisfies chain condition}).
\end{rmk}
Of course, the chain condition need not hold in general.
\begin{exm}
  Let $T$ be the model completion of the theory of triangle-free graphs.
  It eliminates quantifiers. Let $M\vDash T$ and let $\left(a_{i}\right)_{i\in\omega}$
  be an $M$-indiscernible sequence such that $\vDash\neg Ra_{i}b$
  for any $i$ and $b\in M$. Notice that by indiscernibility $\vDash\neg Ra_{i}a_{j}$
  for $i\neq j$. It is easy to see that $Rxa_{0}$ does not divide
  over $M$. On the other hand, $Rxa_{0}\land Rxa_{1}$ divides over
  $M$.
\end{exm}

\subsection{NTP$_2$ implies the chain condition.\protect \\
}

We will need some facts about forking and dividing in $\NTP_2$
theories established in \cite{Chernikov-Kaplan:ForkingDividingNTP2}. Recall that a set $C$ is
an \emph{extension base} if every type in $S(C)$ does not fork over
$C$.
\begin{dfn}
  We say that $\left(a_{i}\right)_{i\in\kappa}$ is a \emph{universal
    Morley sequence} in $p(x)\in S(A)$ when:
  \begin{itemize}
  \item it is indiscernible over $A$ with $a_{i}\vDash p(x)$
  \item for any $\varphi(x,y)\in L(A)$, if $\varphi(x,a_{0})$ divides over
    , then $\left\{ \varphi(x,a_{i})\right\} _{i\in\kappa}$ is inconsistent.
  \end{itemize}
\end{dfn}
\begin{fct}
  \label{fct: Forking and dividing in NTP2} \cite{Chernikov-Kaplan:ForkingDividingNTP2} Assume that $T$
  is $\NTP_2$.
  \begin{enumerate}
  \item Let $M$ be a model. Then for every $p(x)\in S(M)$, there is a universal
    Morley sequence in it.
  \item Let $C$ be an extension base. Then $\varphi(x,a)$ divides over $C$
    if and only if $\varphi(x,a)$ forks over $C$.
  \end{enumerate}
\end{fct}
First we observe that the chain condition always implies equality
of dividing and array dividing:
\begin{prp}
  \label{prop: Chain condition} If $T$ satisfies the chain condition
  over $C$, and forking equals dividing over $C$, then $\varphi(x,a)$ divides over $C$ if and only if it
  array-divides over $C$.
\end{prp}
\begin{proof}
  Assume that $\varphi(x,a)$ does not divide over $C$. Let $\left(a_{ij}\right)_{i,j\in\omega}$
  be a $C$-indiscernible array and $a_{00}=a$. It follows by the chain
  condition and compactness that $\left\{ \varphi\left(x,a_{i0}\right)\right\} _{i\in\omega}$
  does not divide over $C$. But as $\left(\left(a_{ij}\right)_{i\in\omega}\right)_{j\in\omega}$
  is also a $C$-indiscernible sequence, applying the chain condition
  and compactness again we conclude that $\left\{ \varphi\left(x,a_{ij}\right)\right\} _{i,j\in\omega}$
  does not divide over $C$, so in particular it is consistent.
\end{proof}
And in the presence of universal Morley sequences witnessing dividing,
the converse holds:
\begin{prp}
  \label{prop: Chain condition over models} Let $T$ be $\NTP_2$
  and $M\vDash T$. Then forking satisfies the chain condition over
  $M$.
\end{prp}
\begin{proof}
  Let $\kappa$ be very large compared to $\left|M\right|$, assume
  that $\bar{a}_{0}=\left(a_{0i}\right)_{i\in\kappa}$ is indiscernible
  over $M$, $\varphi(x,a_{00})$ does not divide over $M$, but $\varphi(x,a_{00})\land\varphi(x,a_{01})$
  does. By \autoref{fct: Forking and dividing in NTP2}, let $\left(\bar{a}_{i}\right)_{i\in\omega}$
  be a universal Morley sequence in $\tp(\bar{a}_{0}/M)$. By the universality
  and indiscernibility of $\bar{a}_{0}$, $\left\{ \varphi(x,a_{ij_{1}})\land\varphi(x,a_{ij_{2}})\right\} _{i\in\omega}$
  is inconsistent for any $j_{1}\neq j_{2}$. We can extract an $M$-indiscernible
  sequence $\left(\left(a_{ij}'\right)_{i\in\omega}\right)_{j\in\omega}$
  from $\left(\left(a_{ij}\right)_{i\in\omega}\right)_{j\in\kappa}$,
  such that type of every finite subsequence over $M$ is already present
  in the original sequence. It follows that $\left(a_{ij}'\right)_{i,j\in\omega}$
  is an $M$-indiscernible array and that $\left\{ \varphi(x,a_{ij}')\right\} _{i,j\in\omega}$
  is inconsistent, thus $\varphi(x,a_{00})$ array-divides over $M$,
  thus divides over $M$ by \autoref{cor: dividing =00003D array dividing}
  --- a contradiction.
\end{proof}
\begin{thm}
  \label{thm: NTP2 implies Chain Condition} If $T$ is $\NTP_2$,
  then it satisfies the chain condition over extension bases.
\end{thm}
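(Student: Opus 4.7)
The plan is to adapt the proof of \autoref{prop: Chain condition over models} from a model $M$ to an arbitrary extension base $C$. That proof uses modelhood only to invoke \autoref{fct: Forking and dividing in NTP2} for a universal Morley sequence; if we can produce an analogous object over $C$, the argument transfers with almost no change.

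Concretely, suppose $(a_i)_{i<\omega}$ is $C$-indiscernible with $\varphi(x,a_0)$ non-forking (equivalently, non-dividing by \autoref{fct: Forking and dividing in NTP2}) over $C$, and assume for contradiction that $\varphi(x,a_0)\land\varphi(x,a_1)$ divides over $C$. I would extend the sequence to $(a_i)_{i<\kappa}$ with $\kappa$ very large, set $\bar a=(a_i)_{i<\kappa}$, and take a Morley sequence $(\bar a_n)_{n<\omega}$ in $\tp(\bar a/C)$ that is \emph{universal} over $C$, in the sense that whenever $\psi(x,\bar y)$ divides over $C$ at $\bar a_0$, the set $\{\psi(x,\bar a_n)\}_{n<\omega}$ is inconsistent. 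By $C$-indiscernibility of $\bar a$ together with the contradiction hypothesis, $\varphi(x,a_j)\land\varphi(x,a_k)$ divides over $C$ for every $j\neq k$; writing $\bar a_n=(a_n^j)_{j<\kappa}$, universality then yields that $\{\varphi(x,a_n^j)\land\varphi(x,a_n^k)\}_{n<\omega}$ is inconsistent for each $j\neq k$. The array $(a_n^j)_{n,\,j<\omega}$ is then $C$-indiscernible (its rows are $C$-indiscernible copies of $\bar a$, while its columns come from the indiscernibility of the Morley sequence of tuples), starts at $a_0^0=a_0$, and the pairwise inconsistency applied to columns $0$ and $1$ already forces the whole array to be inconsistent. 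Hence $\varphi(x,a_0)$ array-divides over $C$, and \autoref{cor: dividing =00003D array dividing} gives that it in fact divides over $C$, contradicting our standing assumption.

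The main obstacle, and the only substantively new ingredient, is the existence of such a universal Morley sequence over an extension base rather than a model, since \autoref{fct: Forking and dividing in NTP2} provides them only in the latter setting. For this I would invoke the existence of strictly invariant Morley sequences over extension bases in $\NTP_2$ theories, established in \cite{Chernikov-Kaplan:ForkingDividingNTP2}; these are universal in the required sense, and with them in hand the proof becomes essentially a direct repetition of that of \autoref{prop: Chain condition over models}.
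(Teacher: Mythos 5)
Your proposal is correct, but it takes a genuinely different route from the paper's. The paper proves \autoref{thm: NTP2 implies Chain Condition} by a short reduction to \autoref{prop: Chain condition over models}: since $C$ is an extension base, one picks a model $M\supseteq C$ with $M\ind_{C}\bar a$, observes that then $\bigwedge_{i<n}\varphi(x,a_i)$ divides over $C$ if and only if it divides over $M$ (standard transfer of dividing across an independent parameter set), and applies the chain condition over $M$. This way one only needs universal Morley sequences over \emph{models}, which is exactly what \autoref{fct: Forking and dividing in NTP2}(1) supplies as stated. Your proposal instead re-runs the proof of \autoref{prop: Chain condition over models} directly over $C$; the price, as you correctly identify, is that you need universal Morley sequences over an arbitrary extension base, a statement not included in \autoref{fct: Forking and dividing in NTP2} as it appears in this paper, though it is indeed available from the strictly invariant sequences of \cite{Chernikov-Kaplan:ForkingDividingNTP2}. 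So your route works and is more ``direct,'' but it leans on a stronger input than the paper's proof invokes; the paper's reduction via an independent model is a cheaper use of the cited facts. One small point of care in your sketch: the array $(a_n^j)_{n,j}$ you build is not itself $C$-indiscernible out of the box (the rows are realizations of the same EM type, and the sequence of rows is $C$-indiscernible, but full array-indiscernibility requires the extraction step used in \autoref{prop: Chain condition over models}); you gesture at this with ``the argument transfers with almost no change,'' and indeed the same extraction argument applies verbatim, but it should be made explicit in a complete write-up.
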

\begin{proof}
  Let $C$ be an extension base and $\bar{a}=\left(a_{i}\right)_{i\in\omega}$
  be a $C$-indiscernible sequence. As $C$ is an extension base, we
  can find $M\supseteq C$ such that $M\ind_{C}\bar{a}$. It follows
  that for any $n\in\omega$, $\bigwedge_{i<n}\varphi(x,a_{i})$ divides
  over $C$ if and only if it divides over $M$. It follows from \autoref{prop: Chain condition over models} that if $\varphi(x,a_{0})$
  does not divide over $C$, then $\left\{ \varphi(x,a_{i})\right\} _{i\in\omega}$
  does not divide over $C$.
\end{proof}
\begin{cor}
  If $T$ is $\NTP_2$, $A$ is an extension base, $\left(a_{ij}\right)_{i,j\in\omega}$
  is an $A$-indiscernible array, and $\varphi\left(x,a_{00}\right)$
  does not divide over $A$, then $\left\{ \varphi\left(x,a_{ij}\right)\right\} _{i,j\in\omega}$
  does not divide over $A$.
\end{cor}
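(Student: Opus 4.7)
The plan is to observe that this corollary is essentially immediate from the previous results. By \autoref{fct: Forking and dividing in NTP2}(2), over the extension base $A$ dividing and forking coincide; by \autoref{thm: NTP2 implies Chain Condition}, forking satisfies the chain condition over $A$. Both hypotheses of \autoref{prop: Chain condition} are therefore satisfied with $C = A$, and its conclusion is exactly the statement of the corollary. In other words, once \autoref{thm: NTP2 implies Chain Condition} is in hand, the corollary is merely a rephrasing, so I expect no real obstacles.

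For a self-contained argument, I would follow the proof of \autoref{prop: Chain condition}. First consider the $A$-indiscernible sequence $\left(a_{i0}\right)_{i\in\omega}$ (the first row of the array). Since $\varphi\left(x, a_{00}\right)$ does not divide, hence does not fork, over $A$, the chain condition applied iteratively (and a standard compactness argument turning the formula-version of the chain condition into its partial-type version) gives that $\left\{\varphi\left(x, a_{i0}\right)\right\}_{i\in\omega}$ does not divide over $A$; in particular $\bigwedge_{i<n} \varphi\left(x, a_{i0}\right)$ does not divide over $A$ for every $n$.

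Next, the sequence of columns $\left(\left(a_{ij}\right)_{i\in\omega}\right)_{j\in\omega}$ is also $A$-indiscernible by array-indiscernibility. Fixing $n$, the formula (with parameters from the first column) $\bigwedge_{i<n} \varphi\left(x, a_{i0}\right)$ does not divide over $A$, so applying the chain condition once more to this column-indiscernible sequence yields that $\left\{\varphi\left(x, a_{ij}\right)\right\}_{i<n,\, j\in\omega}$ does not divide over $A$. Finally, letting $n \to \infty$ and invoking compactness, the whole array type $\left\{\varphi\left(x, a_{ij}\right)\right\}_{i,j\in\omega}$ does not divide over $A$, as desired. The only mildly technical point is the transition from the formula-version of the chain condition to the partial-type version, but this is a routine compactness and Ramsey argument already implicit in \autoref{lem: equivalents of the chain condition}.
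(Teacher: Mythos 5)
Your proposal is correct and takes essentially the same route as the paper: the corollary is exactly the stronger conclusion already established inside the proof of \autoref{prop: Chain condition}, once its two hypotheses (chain condition over $A$, and forking $=$ dividing over $A$) are supplied by \autoref{thm: NTP2 implies Chain Condition} and \autoref{fct: Forking and dividing in NTP2}(2). Your self-contained two-step argument (first the sequence $\left(a_{i0}\right)_{i\in\omega}$, then the $A$-indiscernible sequence of columns) reproduces the proof of \autoref{prop: Chain condition} verbatim; the only quibble is that $\left(a_{i0}\right)_{i\in\omega}$ is more naturally called the first column rather than the first row, which is purely terminological.
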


\subsection{\label{sec: Chain condition does not imply NTP2} The chain condition
  does not imply $\NTP_2$}
\begin{lem}
  \label{lem: invariance satisfies chain condition} Let $T$ be a theory
  satisfying:
  \begin{itemize}
  \item For every set $A$ and a global type $p(x)$, it does not fork over
    $A$ if and only if it is Lascar-invariant over $A$.
  \end{itemize}
  Then $T$ satisfies the chain condition.
\end{lem}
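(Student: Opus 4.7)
The plan is to turn the non-forking hypothesis on $\varphi(x,a_0)$ into a global Lascar-invariant extension, and then exploit the fact that any two elements of an $A$-indiscernible sequence have the same Lascar strong type over $A$.

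First, I would unpack the chain condition: let $I=(a_i)_{i\in\omega}$ be $A$-indiscernible and assume $\varphi(x,a_0)$ does not fork over $A$. Then $\varphi(x,a_0)$ extends to a global type $p(x)\in S(\M)$ which does not fork over $A$. By the standing hypothesis, $p$ is Lascar-invariant over $A$.

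Second, I would use the standard fact that if $(a_i)_{i\in\omega}$ is an $A$-indiscernible sequence, then all the $a_i$ share the same Lascar strong type over $A$ (having the same Lascar strong type is the finest bounded $A$-invariant equivalence relation, and sitting in a common $A$-indiscernible sequence is an $A$-invariant relation that is easily seen to be bounded). In particular $a_0\equiv^{\lstp}_A a_1$. Lascar-invariance of $p$ over $A$ then yields $\varphi(x,a_1)\in p$ as soon as $\varphi(x,a_0)\in p$. Hence $\varphi(x,a_0)\wedge\varphi(x,a_1)\in p$, and since $p$ does not fork over $A$, neither does this conjunction. Iterating (or just invoking the ``it then follows'' clause in the definition), the whole partial type $\{\varphi(x,a_i)\}_{i\in\omega}$ does not fork over $A$, giving the chain condition.

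There is essentially no serious obstacle here; the only point that needs a word is that two terms of an $A$-indiscernible sequence have the same Lascar strong type over $A$, which is standard. The proof is a one-line application of the hypothesis once this is noted.
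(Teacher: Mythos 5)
Your proof is correct and follows essentially the same route as the paper's: extend $\varphi(x,a_0)$ to a global non-forking, hence Lascar-invariant, type $p$, use that all $a_i$ in an $A$-indiscernible sequence share the same Lascar strong type over $A$ to conclude $\{\varphi(x,a_i)\}_{i\in\omega}\subseteq p$, and then read off non-forking of the conjunction from non-forking of $p$. The paper phrases the last step by taking a realization of $p\rest_{\bar a A}$, but the underlying argument is identical.
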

\begin{proof}
  Let $\bar{a}=\left(a_{i}\right)_{i\in\omega}$ be an $A$-indiscernible
  sequence and assume that $\varphi(x,a_{0})$ does not fork over $A$.
  Then there is a global type $p(x)$ containing $\varphi(x,a_{0})$
  and non-forking over $A$, thus Lascar-invariant over $A$. Taking
  $c\vDash p|_{\bar{a}A}$, it follows by Lascar-invariance that $c\vDash\left\{ \varphi(x,a_{i})\right\} _{i\in\omega}$.
\end{proof}
In \cite[Section 5.3]{Chernikov-Kaplan-Shelah:NonForkingSpectra} the following example is constructed:
\begin{fct}
  There is a theory $T$ such that:
  \begin{enumerate}
  \item $T$ has $\TP_{2}$.
  \item A global type does not fork over a small set $A$ if and only if it
    is finitely satisfiable in $A$ (therefore, if and only if it is Lascar-invariant
    over $A$).
  \end{enumerate}
\end{fct}
It follows from \autoref{lem: invariance satisfies chain condition}
that this $T$ satisfies the chain condition.

\section{\label{sec: Weak-independence-theorem} The independence theorem and Lascar types}
\begin{dfn}
  \label{def: Lascar strong type} As usual, we write $a\equiv_{C}^{\lstp}b$
  to denote that $a$ and $b$ have the same \emph{Lascar type} over
  $C$. That is, if any of the following equivalent properties holds:
  \begin{enumerate}
  \item $a$ and $b$ are equivalent under every $C$-invariant equivalence
    relation with a bounded number of classes.
  \item There are $n\in\omega$ and $a=a_{0},...,a_{n}=b$ such that $a_{i},a_{i+1}$
    start a $C$-indiscernible sequence for each $i<n$.
  \end{enumerate}
  We let $d_{C}\left(a,b\right)$ be the \emph{Lascar distance}, that
  is the smallest $n$ as in (2) or $\infty$ if it does not exist.
\end{dfn}
Now we will use the chain condition in order to deduce a independence
theorem over an extension base.
\begin{lem}
  \label{lem: indisc triangles} Assume that $d_{A}\left(b,b'\right)=1$
  and $a\ind_{Ab}b'$. Then there exists a sequence $\left(a{}_{i}b{}_{i}\right)_{i\in\omega}$
  indiscernible over $A$ and such that $a_{0}b_{0}b_{1}=abb'$.
\end{lem}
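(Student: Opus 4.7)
The plan is to construct the required $A$-indiscernible pair sequence in three phases, using the chain condition (via \autoref{lem: equivalents of the chain condition}\myenumlabel{enumiv}) over $Ab$ as the main engine, together with automorphism arguments.

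First, the hypothesis $d_A(b,b')=1$ gives an $A$-indiscernible sequence $I=(b_i)_{i\in\omega}$ with $b_0=b$, $b_1=b'$. From the tail $(b_i)_{i\geq 1}$ I extract, by Ramsey, an $Ab$-indiscernible sequence; composing with an automorphism over $Ab$ that sends its first term to $b'$, I obtain $(c_j)_{j\in\omega}$ which is $Ab$-indiscernible with $c_0=b'$. Since $(b,c_0,c_1,\ldots)$ shares its $A$-EM-type with $I$, this prepended sequence is still $A$-indiscernible.

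Second, I apply the chain condition over $Ab$: under the standing framework of an $\NTP_2$ theory where $Ab$ is an extension base (so \autoref{thm: NTP2 implies Chain Condition} yields the chain condition over $Ab$), \autoref{lem: equivalents of the chain condition}\myenumlabel{enumiv} applied with base $Ab$, using $a \ind_{Ab} c_0 = b'$ and the $Ab$-indiscernible sequence $(c_j)$, produces $(c'_j)_{j\in\omega}\equiv_{Abb'}(c_j)$ that is indiscernible over $Aba$ and satisfies $a\ind_{Ab}(c'_j)$. In particular $c'_0=b'$, and $(b,c'_0,c'_1,\ldots)$ remains $A$-indiscernible because it has the same $A$-EM-type as $(b,c_0,c_1,\ldots)$. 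Renaming $B_0=b$ and $B_i=c'_{i-1}$ for $i\geq 1$, I obtain a sequence $(B_i)_{i\in\omega}$ that is $A$-indiscernible with $B_0=b$, $B_1=b'$, and whose tail $(B_i)_{i\geq 1}$ is $Aba$-indiscernible.

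Third, and this is where the main technical content lies, I build the $a_i$. Set $a_0=a$, and pick any $a_1$ with $(a_1,b')\equiv_A (a,b)$ (possible since $b\equiv_A b'$). For $i\geq 2$, use the $Aab$-indiscernibility of $(B_i)_{i\geq 1}$ to select $\alpha_i\in\Aut(\M/Aab)$ with $\alpha_i(B_1)=B_i$, and put $a_i=\alpha_i(a_1)$; thus $(a_i,B_i)\equiv_A (a_1,B_1)\equiv_A (a,b)$ by construction, and each $\alpha_i$ fixes the base pair $(a,b)$. To enforce the coherence needed for genuine $A$-indiscernibility (not merely matching 1-types), I work over a sufficiently long extension of $(B_i)_{i\geq 1}$, still $Aab$-indiscernible, and choose the $\alpha_i$ via a compactness/Ramsey extraction on the tuples $(a_i,B_i)$: the $Aab$-indiscernibility of the $B$-tail together with $a\ind_{Ab}(B_i)_{i\geq 1}$ ensures that the EM-type over $A$ of a sufficiently long finite sub-configuration stabilises. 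After Ramsey extraction yielding an $A$-indiscernible subsequence of pairs, an adjustment by an automorphism in $\Aut(\M/Aab)$ — which is available because any two $B_i$'s ($i\geq 1$) are $Aab$-conjugate — re-installs the prescribed initial data $a_0 b_0 b_1=abb'$.

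The main obstacle is the third phase: ensuring that the $\alpha_i$'s can be chosen so that the joint types $\tp(\alpha_{i_1}(a_1),B_{i_1},\ldots,\alpha_{i_n}(a_1),B_{i_n}/A)$ depend only on $n$ and not on the indices $i_1<\cdots<i_n$. The combination of $a\ind_{Ab}(B_i)_{i\geq 1}$ and the $Aab$-indiscernibility of the tail, both secured in phase two by the chain condition, is exactly what is needed to run this compactness/Ramsey argument and to perform the final automorphism adjustment fixing $(a,b)$ while mapping the second $B$-coordinate onto $b'$.
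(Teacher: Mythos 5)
Your Phase~2 is where the approach first goes astray: you invoke the chain condition over $Ab$, justified by applying \autoref{thm: NTP2 implies Chain Condition} with base $Ab$, but that theorem requires $Ab$ to be an extension base. In the application inside \autoref{thm: Weak Independence Theorem} only $A$ is assumed to be an extension base, and extension bases are not preserved under adding parameters, so this is not available. Moreover the lemma is, as the paper says, standard: it holds in an arbitrary theory with no $\NTP_2$ or extension base hypothesis, and the only input ever used is that $\tp(a/Abb')$ does not \emph{divide} over $Ab$, which is given by $a\ind_{Ab}b'$. Concretely: stretch the $A$-indiscernible sequence $(b_i)$ (with $b_0=b$, $b_1=b'$) to length $\kappa$ large; the tail $(b_i)_{i\geq 1}$ is $Ab$-indiscernible; non-dividing produces $a'$ realising $\bigcup_{i\geq 1}\tp(a/Abb')(x,b_i)$; Erd\H os--Rado extracts an $Aba'$-indiscernible subsequence; two automorphism adjustments (first over $Ab$ to re-anchor the subsequence at $b_1$, then over $Abb'$ to send $a'$ back to $a$) give exactly your $(B_i)_{i\geq 0}$: $A$-indiscernible, $B_0=b$, $B_1=b'$, and $(B_i)_{i\geq 1}$ indiscernible over $Aab$. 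The extra conclusion $a\ind_{Ab}(B_i)_{i\geq 1}$ provided by the chain condition is never needed.

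The genuine gap is Phase~3. You pick $a_1$ subject only to $(a_1,b')\equiv_A(a,b)$ and set $a_i=\alpha_i(a_1)$ with $\alpha_i\in\Aut(\M/Aab)$ shifting the tail. This guarantees $\tp(a_iB_i/A)=\tp(ab/A)$ for all $i$, but the triple type $\tp(a_iB_iB_j/A)$ (for $i<j$) equals $\tp(a_1B_1B_{j-i+1}/A)$, and there is no control over whether this is $\tp(abb'/A)$: the single condition $(a_1,B_1)\equiv_A(a,b)$ says nothing about $\tp(a_1B_1B_2/A)$, since $\tp(a_1B_2/A)$ is extra data not pinned down by your choice. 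Consequently the Ramsey extraction need not preserve the required restriction of the EM type to the first three coordinates, and the final automorphism adjustment cannot re-install $b_1=b'$; the extra information $a\ind_{Ab}(B_i)_{i\geq 1}$ that you flag as ``exactly what is needed'' concerns $a$, not $a_1$, and does not resolve this. The fix is to define \emph{all} the $a_i$ (including $a_1$) by shifting $a$ itself over $A$: take $\sigma_i\in\Aut(\M/A)$ with $\sigma_i(B_j)=B_{j+i}$ for all $j\geq 0$ (so $\sigma_i(B_0)=B_i$) and set $a_i=\sigma_i(a)$. Then $\tp(a_iB_iB_j/A)=\tp(aB_0B_{j-i}/A)=\tp(abb'/A)$ for all $i<j$ by the $Aab$-indiscernibility of $(B_k)_{k\geq 1}$, so every pair has $A$-type $\tp(ab/A)$ and every pair of pairs has the right triple restriction; after an Erd\H os--Rado extraction of an $A$-indiscernible $(e_jf_j)_{j<\omega}$, compose with $\rho\in\Aut(\M/A)$ sending $e_0f_0\mapsto ab$ and then $\mu\in\Aut(\M/Aab)$ sending $\rho(f_1)\mapsto b'$ to obtain the required sequence.
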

\begin{proof}
  Standard.
\end{proof}
\begin{thm}
  \label{thm: Weak Independence Theorem} Let $T$ be $\NTP_2$ and
  $A$ an extension base. Assume that $c\ind_{A}ab$, $a\ind_{A}bb'$
  and $b\equiv_{A}^{\lstp}b'$. Then there is $c'$ such that $c'\ind_{A}ab'$,
  $c'a\equiv_{A}ca$, $c'b'\equiv_{A}cb$.
\end{thm}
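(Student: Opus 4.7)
The plan is to induct on the Lascar distance $n=d_A(b,b')$. The case $n=0$ is immediate (take $c':=c$); the case $n=1$ is the main content, where the amalgamation is carried out concretely via an indiscernible sequence using the chain condition; the general case reduces to $n=1$ by a chaining argument along a minimal Lascar path.

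For the base case $n=1$: monotonicity gives $a\ind_{Ab}b'$, so \autoref{lem: indisc triangles} produces an $A$-indiscernible sequence $(a_ib_i)_{i\in\omega}$ with $(a_0,b_0,b_1)=(a,b,b')$. Since $A$ is an extension base and $T$ is NTP$_2$, \autoref{fct: Forking and dividing in NTP2} ensures that $c\ind_A ab$ means $\tp(c/Aab)$ does not fork over $A$. Applying the chain condition (\autoref{thm: NTP2 implies Chain Condition}) to the indiscernible sequence $((a_i,b_i))_i$ together with compactness shows that the partial type $\bigcup_{i\in\omega} p(x;a_i,b_i)$ — where $p(x;y,z)$ is the schema obtained from $\tp(c/Aab)$ by replacing $a,b$ with fresh variables — is consistent and non-forking over $A$. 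Extending to a complete non-forking type over $A(a_ib_i)_i$ and realizing it by some $c^*$, we obtain $c^*\ind_A(a_ib_i)_i$ together with $c^*a_ib_i\equiv_A cab$ for every $i$. Setting $c':=c^*$: the case $i=0$ yields $c'a\equiv_A ca$; the case $i=1$, combined with $b_1=b'$, yields $c'b'\equiv_A cb$; and $c^*\ind_A(a_ib_i)_i$ contains $c^*\ind_A a_0b_1=ab'$.

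For the inductive step $n\geq 2$: choose $b_0^*$ on a minimal Lascar path from $b$ to $b'$, so that $d_A(b,b_0^*)=1$ and $d_A(b_0^*,b')\leq n-1$. The aim is to upgrade $b_0^*$ to a $b^*$ retaining both distance conditions and additionally satisfying $a\ind_A bb^*$ and $a\ind_A b^*b'$. Given such a $b^*$, one applies the base case to $(c,a,b,b^*)$ to produce $c_1$ with $c_1\ind_A ab^*$, $c_1a\equiv_A ca$, $c_1b^*\equiv_A cb$; then applies the inductive hypothesis to $(c_1,a,b^*,b')$ to produce $c'$ with $c'\ind_A ab'$, $c'a\equiv_A c_1a\equiv_A ca$, and $c'b'\equiv_A c_1b^*\equiv_A cb$, as required.

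The main difficulty is the construction of the intermediate $b^*$: naively one would extend $\tp(b_0^*/Abb')$ non-forkingly over $a$, but $Abb'$ is not in general an extension base in an NTP$_2$ theory. The workaround is to first produce $b^*\equiv_A b_0^*$ with $b^*\ind_A abb'c$ (available since $A$ is an extension base) and transport the entire configuration by an $A$-automorphism sending $b_0^*$ to $b^*$, which preserves both Lascar distances; verifying the required independences $a\ind_A bb^*$ and $a\ind_A b^*b'$ in the resulting picture then relies on symmetry and the chain rule for non-forking over extension bases in NTP$_2$ theories, established in \cite{Chernikov-Kaplan:ForkingDividingNTP2}.
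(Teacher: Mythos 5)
Your base case $n=1$ is essentially the paper's argument: both invoke \autoref{lem: indisc triangles} to get the $A$-indiscernible sequence $(a_ib_i)_{i\in\omega}$, both then use the chain condition to realize $\bigcup_i p(x;a_i,b_i)$ by some $c^*$ independent from the whole sequence over $A$, and both read off the three conclusions from $i=0$, $i=1$. The paper just cites \autoref{lem: equivalents of the chain condition}(4) directly, whereas you re-derive it from the formula-by-formula statement plus compactness; that is a cosmetic difference.

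Your inductive step, however, has a genuine gap. You pick $b_0^*$ on a minimal Lascar path, want to replace it by a ``more independent'' $b^*$, and propose to do so by finding $b^*\equiv_A b_0^*$ with $b^*\ind_A abb'c$ and then applying an $A$-automorphism $\sigma$ with $\sigma(b_0^*)=b^*$. This cannot work: Lascar distance $d_A(b,\,\cdot\,)$ is anchored to the specific element $b$, not to $\tp(b/A)$, so $b^*\equiv_A b_0^*$ gives no control over $d_A(b,b^*)$ or $d_A(b^*,b')$. (For instance, if $b_0^*=b$ then $d_A(b,b_0^*)=0$, while $d_A(b,b^*)$ could be arbitrary for a conjugate $b^*\neq b$.) And if instead you apply $\sigma$ to the \emph{whole} configuration to keep the distances, you have moved $a,b,b',c$ to $\sigma(a),\sigma(b),\sigma(b'),\sigma(c)$, and the independence $b^*\ind_A abb'c$ that you arranged refers to the \emph{un}moved tuple, so it says nothing about the new one. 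Either way the required independences $a\ind_A bb^*$ and $a\ind_A b^*b'$ are not secured.

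The correct move — and the one the paper makes — is to move $a$ (and $c$ along with it) rather than the path. Since $a\ind_A bb'$, $\tp(a/Abb')$ has a global extension non-forking over $A$; restricting it to $Ab_0\cdots b_n$ and pulling back by an $Abb'$-automorphism lets one assume $a\ind_A b_0\cdots b_n$. Because this automorphism fixes $Abb'$ pointwise, it preserves both $\tp(c/Aab)$ (so $c\ind_A ab$ and the type-equalities survive) and the entire Lascar path $b=b_0,\dots,b_n=b'$. After that WLOG, the paper runs the distance-one case along the path exactly as you intend (your two-step split into one application of the base case followed by the inductive hypothesis is equivalent to the paper's induction on $i\le n$). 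So the architecture of your proof is right; what needs to be repaired is which element gets moved in the ``WLOG'' step.
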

\begin{proof}
  Let us first consider the case $d_{A}\left(b,b'\right)=1$. Since
  $a\ind_{Ab}b'$, by \autoref{lem: indisc triangles} we can find
  $\left(a{}_{i}b{}_{i}\right)_{i\in\omega}$ indiscernible over $A$
  and such that $a_{0}b_{0}b_{1}=abb'$. As $c\ind_{A}a{}_{0}b{}_{0}$,
  it follows by the chain condition that there exists $c'\equiv_{Aa_{0}b{}_{0}}c$
  such that $c'\ind_{A}\left(a_{i}b_{i}\right)_{i\in\omega}$ and $\left(a_{i}b_{i}\right)_{i\in\omega}$
  is indiscernible over $c'A$. In particular $c'\ind_{A}ab'$, $c'a\equiv_{A}ca$
  and $c'b'\equiv_{A}c'b\equiv_{A}cb$, as desired.

  For the general case, assume that $d_{A}\left(b,b'\right)\leq n$,
  namely that there are $b_{0},...,b_{n}$ be such that $b_{i}b_{i+1}$
  start an $A$-indiscernible sequence for all $i<n$ and $b_{0}=b$,
  $b_{n}=b'$. We may assume that $a\ind_{A}b_{0}...b_{n}$.

  By induction on $i\leq n$ we choose $c_{i}$ such that:
  \begin{enumerate}
  \item $c_{i}\ind_{A}ab_{i}$,
  \item $c_{i}a\equiv_{A}ca$,
  \item $c_{i}b_{i}\equiv_{A}cb_{0}$.
  \end{enumerate}
  Let $c_{0}=c$, it satisfies (1)--(3) by hypothesis. Given $c_{i}$,
  by the Lascar distance 1 case there is some $c_{i+1}\ind_{A}ab_{i+1}$
  such that $c_{i+1}a\equiv_{A}c_{i}a\equiv_{A}ca$ and $c_{i+1}b_{i+1}\equiv_{A}c_{i}b_{i}\equiv_{A}cb_{0}$
  (by the inductive assumption).

  It follows that $c'=c_{n}$ is as wanted.
\end{proof}
\begin{rmk}
  For simplicity of notation, let us work over $A=\emptyset$.
\begin{enumerate}
  \item It is easy to see that the usual statement of the independence theorem for simple theories implies
  this one. Indeed, let $c_{1}$ be such that $c_{1}b'\equiv^{\lstp}cb$.
  Then $c_{1}\ind b'$, $c\ind a$, $a\ind b'$ and $c_{1}\equiv^{\lstp}c$.
  By the independence theorem we find $c'$ such that $c'\ind ab'$,
  $c'a\equiv ca$ and $c'b'\equiv c_{1}b'\equiv cb$.

  \item Conversely, in a simple theory, the usual independence theorem follows from
  ours by a direct forking calculus argument. Indeed, assume
  that we are given $d_{1}\ind e_{1}$, $d_{2}\ind e_{2}$, $d_{1}\equiv^{\lstp}d_{2}$
  and $e_{1}\ind e_{2}$. Using symmetry and \autoref{lem: extending Lstp}
  we find $e_{1}'d_{2}'$ such that $e_{1}'d_{2}'\ind e_{1}e_{2}$ and
  $e_{1}'d_{2}'\equiv^{\lstp}e_{1}d_{1}$. It is easy to check that
  all the assumptions of \autoref{thm: Weak Independence Theorem} are satisfied
  with $c=d_{2}'$, $b=e_{1}'$, $a=e_{2}$ and $b'=e_{1}$. Applying
  it we find some $d$ such that $d\ind e_{1}e_{2}$, $de_{1}\equiv d_{2}'e_{1}'\equiv d_{1}e_{1}$
  and $de_{2}\equiv d_{2}e_{2}$.

\end{enumerate}
\end{rmk}
We observe that the chain condition means precisely that the ideal
of forking formulas is S1, in the terminology of Hrushovski \cite{Hrushovski:ApproximateSubgroups}.
Combining \autoref{prop: Chain condition} with \cite[Theorem~2.18]{Hrushovski:ApproximateSubgroups}
we can slightly relax the assumption on the independence between the
elements, at the price of assuming that some type has a global invariant
extension:
\begin{prp}
  Let $T$ be $\NTP_2$ and $A$ an extension base. Assume that $c\ind_{A}ab$,
  $b\ind_{A}a$, $b'\ind_{A}a$, $b\equiv_{A}b'$ and $\tp\left(a/A\right)$
  extends to a global $A$-invariant type. Then there exists $c'\ind_{A}ab'$
  and $c'b'\equiv_{A}cb$, $c'a\equiv_{A}ca$.
\end{prp}
Using \autoref{thm: Weak Independence Theorem}, we can show that in $\NTP_2$
theories Lascar types coincide with Kim-Pillay strong types over extension
bases.
\begin{cor}
  \label{cor: lascar type is compact} Assume that $T$ is $\NTP_2$
  and $A$ is an extension base. Then $d\equiv_{A}^{\lstp}e$ if and
  only if $d_{A}(d,e)\leq3$.
\end{cor}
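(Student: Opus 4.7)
The direction $(\Leftarrow)$ is immediate from the definition of Lascar distance. For $(\Rightarrow)$, assume $d\equiv_A^{\lstp}e$. The plan is to produce a chain $d,f,g,e$ of length $3$ in which each consecutive pair starts an $A$-indiscernible sequence, hence $d_A(d,e)\le 3$.

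The first step is to choose the interpolating elements using that $A$ is an extension base. Successively pick $f$ realizing a non-forking extension of $\tp(d/A)$ over $Ade$, so $f\ind_A de$ and $f\equiv_A d$; and then $g$ realizing a non-forking extension over $Adef$, so $g\ind_A def$ and $g\equiv_A d$. By $d\equiv_A e$ (which follows from $d\equiv_A^{\lstp}e$), we also have $f\equiv_A e$ and $g\equiv_A e$.

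The second step is to show each pair starts an $A$-indiscernible sequence. For a pair $(a,b)$ with $a\ind_A b$ and $a\equiv_A b$, the idea is: take an $A$-indiscernible sequence $(b_i)_{i<\omega}$ with $b_0=b$ and $b_i\equiv_A a$ (obtained via Ramsey and compactness from realizations of $\tp(a/A)$), then apply the chain condition (\autoref{thm: NTP2 implies Chain Condition}, in the form of \autoref{lem: equivalents of the chain condition}\autoref{item:ChainCondition} or item (4)) to relocate the sequence so that it is $Ab_0$-indiscernible while $a$ remains independent of it, and show that $a$ may be inserted at the front. This handles $(d,f)$ and $(f,g)$ directly. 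For the pair $(g,e)$, we leverage $d\equiv_A^{\lstp}e$ and apply the Weak Independence Theorem \autoref{thm: Weak Independence Theorem} with $b=d$, $b'=e$ and suitable choices of parameters involving $f,g$; this transfers the analogous property on the $d$-side (where we already know $d_A(d,g)\leq 2$) to the $e$-side, yielding $d_A(g,e)\le 1$.

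The main obstacle is the insertion step: showing that a non-forking pair with equal $A$-type actually extends to an $A$-indiscernible sequence. In simple theories this is standard Morley-sequence business, but in $\NTP_2$ it requires genuinely combining the chain condition with the extension base hypothesis, since Morley sequences need not be $A$-indiscernible in general. The bound of $3$ (rather than the $2$ of the simple case) reflects the fact that one cannot directly promote $\equiv_A$ to $\equiv_A^{\lstp}$ for the interpolating element, so an extra element is needed to bridge across via WIT.
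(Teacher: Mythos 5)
Your overall plan — interpolate with two elements obtained from non-forking extensions and then use the Weak Independence Theorem to close the gap to $e$ — is in the spirit of the paper's proof, but step 2 contains a genuine error. The claim that $a\ind_A b$ together with $a\equiv_A b$ forces $(a,b)$ to start an $A$-indiscernible sequence is false, even over an extension base in a stable theory: take $T$ the theory of an equivalence relation with exactly two infinite classes, $A=\emptyset$, and $a,b$ in different classes. Then $a\equiv_\emptyset b$ and $a\ind_\emptyset b$, but $a,b$ cannot start an indiscernible sequence (a third term would have to be $E$-equivalent to both). Your chain-condition ``relocate and insert at the front'' manoeuvre does not repair this: after relocating $(b_i)$ to $(b'_i)$ indiscernible over $Aa$ with $a\ind_A (b'_i)$, you know $\tp(ab'_1/A)=\tp(ab'_2/A)$, but nothing identifies $\tp(ab'_1/A)$ with $\tp(b'_0b'_1/A)$, which is exactly what ``insert at the front'' requires. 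In fact $(a,b)$ starting an $A$-indiscernible sequence entails $a\equiv^{\lstp}_A b$, and you only have $a\equiv_A b$. Your closing remark that ``Morley sequences need not be $A$-indiscernible in general'' is a misconception (a Morley sequence over $A$ is by construction $A$-indiscernible), and it seems to be what led you away from the correct fix.

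The correct fix is to realize the interpolating elements as consecutive terms of a Morley sequence in a global non-forking extension of $\tp(d/A)$ (which exists since $A$ is an extension base): take $(d_i)_{i<\omega}$ Morley over $A$ with $d_0=d$ and (by moving by an automorphism over $Ad_0$) with $d_{\geq 1}\ind_A d_0 e$. Then $(d_i)$ is $A$-indiscernible automatically, so $d_A(d_0,d_1)\leq 1$ and $d_A(d_0,d_2)\leq 1$ are free, and no chain-condition argument is needed. One then applies \autoref{thm: Weak Independence Theorem} with $a=d_1$, $b=d_0$, $b'=e$, $c=d_{>1}$ to obtain a new tail $d'_{>1}$ (note it replaces your $g$; the theorem does not preserve the $g$ chosen a priori, so your conclusion ``$d_A(g,e)\leq 1$'' for the original $g$ is unjustified), with $d_1+d'_{>1}$ and $e+d'_{>1}$ both $A$-indiscernible. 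The chain is then $d,d_1,d'_2,e$, giving $d_A(d,e)\leq 3$.
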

\begin{proof}
  Let $d\equiv_{A}^{\lstp}e$ and let $\left(d_{i}\right)_{i\in\omega}$
  be a Morley sequence over $A$ starting with $d=d_{0}$. As $d_{\geq1}\ind_{A}d_{0}$,
  we may assume that $d_{\geq1}\ind_{A}d_{0}e$.

  We have:
  \begin{itemize}
  \item $d_{>1}\ind_{A}d_{0}d_{1}$
  \item $d_{1}\ind_{A}d_{0}e$
  \item $d_{0}\equiv_{A}^{\lstp}e$
  \end{itemize}
  Applying \autoref{thm: Weak Independence Theorem} (with $a=d_{1}$, $b=d_{0}$,
  $b'=e$ and $c=d_{>1}$) we get some $d'_{>1}$ such that $d_{1}d'_{>1}\equiv_{A}d_{1}d_{>1}$
  (thus $d_{1}+d'_{>1}$ is an $A$-indiscernible sequence) and $ed_{>1}'\equiv_{A}d_{0}d_{>1}$
  (thus $e+d'_{>1}$ is an $A$-indiscernible sequence). It follows
  that $d_{A}(d,e)\leq3$ along the sequence $d,d_{1},d_{2}',e$.
\end{proof}
\begin{rmk}
  Consider the standard example \cite[Section~4]{Casanovas-Lascar-Pillay-Ziegler:GaloisGroups}
  showing that the Lascar distance can be exactly $n$ for any $n\in\omega$.
  It is easy to see that this theory is $\NIP$, as it is interpretable
  in the real closed field. However, $\emptyset$ is not an extension
  base.
\end{rmk}
It is known that both in simple theories (for arbitrary $A$) and
in $\NIP$ theories (for $A$ an extension base), $a\equiv_{A}b$
implies that $d_{A}\left(a,b\right)\leq2$ (\cite[Corollary~2.10(i)]{Hrushovski-Pillay:NIPInvariantMeasures}),
while our argument only gives an upper bound of $3$. Thus it is natural
to ask:
\begin{qst}
  Is there an $\NTP_2$ theory $T$, an extension base $A$ and tuples
  $a,b$ such that $d_{A}\left(a,b\right)=3$?
\end{qst}
\begin{dfn}
  Let $a\equiv'_{A}b$ be the transitive closure of the relation ``$a,b$
  start a Morley sequence over $A$, or $b,a$ starts a Morley sequence
  over $A$''. This is an $A$-invariant equivalence relation refining
  $\equiv_{A}^{\lstp}$.
\end{dfn}
The proof of \autoref{cor: lascar type is compact} demonstrates
in particular that if $A$ is an extension base in an $\NTP_2$
theory, then $a\equiv_{A}^{\lstp}b$ if and only if $a\equiv_{A}'b$.
We show that in fact this holds in a much more general setting.

Let $T$ be an arbitrary theory. We call a type $p\left(x\right)\in S\left(A\right)$
\emph{extensible} if it has a global extension non-forking over $A$,
equivalently if it does not fork over $A$ (thus $A$ is an extension
base if and only if every type over it is extensible).
\begin{lem}
  \label{lem: extending Lstp} Let $\tp\left(a/A\right)$ be extensible.
  Then for any $b$ there is some $a'$ such that $a'\equiv_{A}'a$
  and $a'\ind_{A}b$.
\end{lem}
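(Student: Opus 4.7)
The plan is to realize a candidate $a'$ via the extensibility hypothesis, and then to show $a\equiv_A' a'$ by constructing a Morley sequence over $A$ beginning with the pair $(a,a')$. Choose a global extension $q\in S(\M)$ of $\tp(a/A)$ that does not fork over $A$; this exists by extensibility. Realize $a'\vDash q|_{Aab}$. Then $a'\ind_A ab$, so in particular $a'\ind_A b$ and $\tp(a'/A)=\tp(a/A)$, giving a candidate for $a'$. The remaining work is to establish $a\equiv_A' a'$.

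To exhibit a Morley sequence starting $(a,a')$, iterate the realization of $q$: set $a_0=a$ and $a_1=a'$, and for each $i\geq 2$ pick $a_i\vDash q|_{Aa_{<i}b}$. Each $a_i$ realizes $\tp(a/A)$, and $a_i\ind_A a_{<i}b$ for $i\geq 1$. Apply Erd\H{o}s--Rado to the tail $(a_i)_{i\geq 2}$ to extract an $Aaa'b$-indiscernible subsequence $(a_i')_{i\geq 2}$, producing a candidate sequence $(a,a',a_2',a_3',\ldots)$ that, once the prefix is incorporated, is $A$-indiscernible. Once $A$-indiscernibility is secured, the pair $(a,a')$ starts a Morley sequence over $A$, and so $a\equiv_A' a'$ by definition of $\equiv_A'$.

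The main obstacle is verifying $A$-indiscernibility across the prefix--tail join: one must check that $\tp(aa'/A)$ equals the common successive $2$-type $\tp(a_i'a_{i+1}'/A)$ occurring in the extracted tail. Both are of the same shape---a realization of $\tp(a/A)$ followed by a $q$-non-forking extension over $A$ plus that realization---so the equality amounts to a uniformity of $q|_{Ac}$ as $c$ varies over realizations of $\tp(a/A)$. If this uniformity is not directly available, a chain approach substitutes: insert an intermediate $a''\vDash q|_{Aaa'b}$ and establish $a\equiv_A' a''$ and $a''\equiv_A' a'$ via two separate shorter Morley-sequence constructions, thereby avoiding the need for a single indiscernible sequence joining $a$ and $a'$ directly.
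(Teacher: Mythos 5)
The gap you flag is real and it sinks the argument as written. When $q$ is non-forking over $A$ but not Lascar-invariant, the sequence $a_0 = a$, $a_1 = a'$, $a_i \vDash q|_{Aa_{<i}b}$ for $i\geq 2$ has no reason to be $A$-indiscernible: the successive $2$-types $\tp(a_i a_{i+1}/A)$ depend on the restriction $q|_{Aa_i}$, which can genuinely vary as $a_i$ ranges over realizations of $\tp(a/A)$. Extracting an $Aaa'b$-indiscernible tail does not repair this, since you then need $\tp(aa'/A)$ to coincide with the constant $2$-type of the extracted tail, and nothing forces agreement. The chain fallback inherits the same defect at each link: to get $a \equiv'_A a''$ you would still have to exhibit an $A$-indiscernible Morley sequence beginning $a, a''$, which is exactly the construction you could not carry out for $(a, a')$.

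The paper's proof avoids the problem by not committing to a specific $a'$ in advance. It first produces an $A$-indiscernible Morley sequence $(a_i)_{i\in\omega}$ over $A$ with $a_0 = a$; this needs only extensibility (realize a long non-forking sequence via $q$, extract an $A$-indiscernible subsequence whose finite types over $A$ are realized in the original so that non-forking is inherited, then conjugate over $A$ to make the first term equal $a$). Once indiscernibility is secured at the outset, the rest is soft: $a_{\geq 1} \ind_A a_0$, so $\tp(a_{\geq 1}/Aa_0)$ extends to some $a'_{\geq 1}$ with $a'_{\geq 1} \ind_A a_0 b$ and $a'_{\geq 1} \equiv_{Aa_0} a_{\geq 1}$; since $\tp(a_0 + a'_{\geq 1}/A) = \tp(a_0 + a_{\geq 1}/A)$, the new sequence is again a Morley sequence over $A$, hence $a'_1 \equiv'_A a$ and $a'_1 \ind_A b$, and one takes $a' := a'_1$. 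The moral is that the witness $a'$ should be read off the moved tail of an already-indiscernible Morley sequence, rather than chosen first and then forced into one.
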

\begin{proof}
  Let $\left(a_{i}\right)_{i\in\omega}$ be a Morley sequence over $A$
  starting with $a_{0}$. It follows that $a_{\geq1}\ind_{A}a_{0}$.
  Then there is $a_{\geq1}'\ind_{A}a_{0}b$ and such that $a_{\geq1}\equiv_{a_{0}A}a_{\geq1}'$.
  In particular $a_{0}+a_{\geq1}'$ is still a Morley sequence over
  $A$, thus $a_{1}'\equiv_{A}'a_{0}$, and $a_{1}'\ind_{A}b$ as wanted.
\end{proof}
\begin{prp}
  Let $p$ be an extensible type. Then $a\equiv_{A}^{\lstp}b$ if and
  only if $a\equiv'_{A}b$, for any $a,b\vDash p\left(x\right)$.
\end{prp}
\begin{proof}
  By \autoref{def: Lascar strong type}(1) it is enough to show
  that $\equiv'_{A}$ has boundedly many classes on the set of realizations
  of $p$.

  Assume not, and let $\kappa$ be large enough. We will choose $\equiv'$-inequivalent
  $\left(a_{i}\right)_{i\in\kappa}$ such that in addition $a_{i}\ind_{A}a_{<i}$.
  Suppose we have chosen $a_{<j}$ and let us choose $a_{j}$. Let $b\vDash p$
  be $\equiv_{A}'$-inequivalent to $a_{i}$ for all $i<j$. By \autoref{lem: extending Lstp}, there exists $a_{j}\equiv'_{A}b$ such
  that $a_{j}\ind_{A}a_{<j}$. In particular $a_{j}\not\equiv'_{A}a_{i}$
  for all $i<j$ as desired.

  With $\kappa$ sufficiently large, we may extract an $A$-indiscernible
  sequence $\bar{b}=\left(b_{i}\right)_{i\in\omega}$ from $\left(a_{i}\right)_{i\in\kappa}$
  --- a contradiction, as then $\bar{b}$ is a Morley sequence over
  $A$ but $b_{i}\not\equiv'_{A}b_{j}$ for any $i\neq j$.
\end{proof}

\section{\label{sec: Dividing-order} The dividing order}

In this section we suggest a generalization of the fundamental order
of Poizat \cite{Poizat:Cours} in the context of $\NTP_2$ theories.
For simplicity of notation, we only consider $1$-types, but everything
we do holds for $n$-types just as well.

Given a partial type $r\left(x\right)$ over $A$, we let $S^{\EM,r}(A)$
be the set of Ehrenfeucht-Mostowski types of $A$-indiscernible sequences
in $r(x)$. We will omit $A$ when $A=\emptyset$ and omit $r$ when
it is ``$x=x$''.
\begin{dfn}
  Given $p\in S^{\EM}\left(A\right)$, let $\cl^{\div}(p)$ be the
  set of all $\varphi(x,y)\in L\left(A\right)$ such that for some (any)
  infinite $A$-indiscernible sequences $\bar{a}\vDash p$, the set $\left\{ \varphi(a_{i},y)\right\} _{i\in\omega}$
  is consistent. For $p,q\in S^{\EM}\left(A\right)$, we say that
  $p\sim_{A}^{\div}q$ ( respectively, $p\leq_{A}^{\div}q$) if $\cl^{\div}(p)=\cl^{\div}(q)$
  (respectively, $\cl^{\div}(p)\supseteq\cl^{\div}(q)$). We obtain
  a partial order $\left(S^{\EM}(A)/\sim_{A}^{\div},\leq_{A}^{\div}\right)$.
\end{dfn}
\begin{prp}
  Let $T$ be stable. Then $p\sim^{\div}q$ if and only if $p=q$, and
  $\left(S^{\EM},\leq^{\div}\right)$ is isomorphic to the fundamental
  order of $T$.
\end{prp}
\begin{proof}
  For a type $p$ over a model $M$ we let $\cl(p)$ denote its fundamental
  class, namely the set of formulas $\varphi(x,y)$ such that there
  exists an instance $\varphi(x,b)\in p(x)$. We denote the fundamental
  order of $T$ by $\left(S/\sim^{\fund},\leq^{\fund}\right)$ where
  $S$ is the set of all types over all models of $T$, $p\leq^{\fund}q$
  if $\cl(p)\supseteq\cl(q)$ and $\sim^{\fund}$ is the corresponding
  equivalence relation. Given $p\in S\left(M\right)$, let $p^{\left(\omega\right)}\in S_{\omega}\left(M\right)$
  be the type of its Morley sequence over $M$. By stability $p^{\left(\omega\right)}$
  is determined by $p$. Let $p^{\EM}$ be the Ehrenfeucht-Mostowski
  type over the empty set of $\bar{a}\vDash p^{\left(\omega\right)}|_{M}$.
  Let $f:\, S\to S^{\EM}$, $f:\, p\mapsto p^{\EM}$.
  \begin{enumerate}
  \item \label{enu: lemma for fund order} Given $p\in S\left(M\right)$,
    let $\bar{a}\vDash p^{\left(\omega\right)}$, and let us show that
    $\varphi(x,y)\in\cl(p)$ if and only if $\left\{ \varphi\left(a_{i},y\right)\right\} _{i\in\omega}$
    is consistent. Indeed, by stability, either condition is equivalent
    to:$\varphi(a_{0},y)$ does not divide over $M$. In other words,
    $\cl(p)=\cl^{\div}(f(p))$, so $p\leq^{\fund}q\,\Leftrightarrow\, f\left(p\right)\leq^{\div}f\left(q\right)$.
  \item We show that $f$ is onto\textbf{.} Let $P\in S^{\EM}$ be arbitrary,
    and let $\left(a_{i}\right)_{i\in2\omega}$ be an indiscernible sequence
    with $P$ as its EM type. Let $M$ be a model containing $I=\left(a_{i}\right)_{i\in\omega}$,
    such that $J=\left(a_{\omega+i}\right)_{i\in\omega}$ is indiscernible
    over $M$. Then $J$ is a Morley sequence in $p\left(x\right)=\tp\left(a_{\omega}/M\right)$,
    and $f\left(p\right)=P$, as wanted.
  \item To conclude, let $P,Q\in S^{\EM}$, $P\sim^{\div}Q$, and let
    us show that they are equal. Let $p\in S(M)$ and $q\in S(N)$ be
    sent by $f$ to $P$ and $Q$, respectively. Since $Th(M)\subseteq\cl^{\div}(P)$
    and similarly for $N,Q$, we have $M\equiv N$. Taking non-forking
    extensions of $p,q$, we may therefore assume that $M=N$ is a monster
    model. Since $\cl(p)=\cl(q)$, the types of (the parameters of) their
    definitions are the same, so there exists an automorphism sending
    one definition to the other, and therefore sending $p\mapsto q$.
    Since $f(p)$ does not involve any parameters, it follows that $P=f(p)=f(q)=Q$.
  \end{enumerate}
\end{proof}
\begin{rmk}
  A couple of remarks on the existence of the greatest element in the
  dividing order in $\NTP_2$ theories.
  \begin{enumerate}
  \item Given a type $r(x_{1},x_{2})\in S(A)$, assume that $p\left(\left(x_{1j},x_{2j}\right)_{j\in\omega}\right)$
    is the greatest element in $S^{\EM,r}(A)$ (modulo $\sim_{A}^{\div}$).
    Then for $i=1,2$, $p_{i}\left(\left(x_{ij}\right)_{j\in\omega}\right)=p|_{\left(x_{ij}\right)_{j\in\omega}}$
    is the greatest element in $S^{\EM,r_{i}}(A)$ with $r_{i}=r|_{x_{i}}$.
  \item If for every $r\in S(A)$ there is a $\leq^{\div}$-greatest element
    in $S^{\EM,r}(A)$, then a formula $\varphi(x,a)$ forks over
    $A$ if and only if it divides over $A$.
  \item If $T$ is $\NTP_2$ then for every extension base $A$ and $r\in S(A)$
    there is a $\leq^{\div}$-greatest element in $S^{\EM,r}(A)$.
  \end{enumerate}
\end{rmk}
\begin{proof}

  \begin{enumerate}
  \item Clear as e.g. given an $A$-indiscernible sequence $\left(a_{1j}\right)_{j\in\omega}$
    in $r_{1}(x_{1})$, by compactness and Ramsey we can find $\left(a_{2j}\right)_{j\in\omega}$
    such that $\left(a_{1j}a_{2j}\right)_{j\in\omega}$ is an $A$-indiscernible
    sequence in $r(x_{1},x_{2})$.
  \item Assume that $\varphi(x,a)\vdash\bigvee_{i<k}\varphi_{i}(x,a_{i})$
    and $\varphi_{i}(x,a_{i})$ divides over $A$ for each $i<k$. Let
    $r(xx_{0}\ldots x_{k-1})=\tp(aa_{0}\ldots a_{k-1}/A)$, let $p(\bar{x}\bar{x}_{0}\ldots\bar{x}_{k-1})$
    be the greatest element in $S^{\EM,r}(A)$ and let $\left(a_{j}a_{0j}\ldots a_{(k-1)j}\right)_{j\in\omega}$
    realize it. As $\left\{ \varphi(x,a_{j})\right\} _{j\in\omega}$ is
    consistent, it follows that $\left\{ \varphi_{i}(x,a_{ij})\right\} _{j\in\omega}$
    is consistent for some $i<k$ --- contradicting the assumption that
    $\varphi_{i}(x,a_{i})$ divides by (i).
  \item Let $a\vDash r$. As $A$ is an extension base, let $M\supseteq A$
    be a model such that $M\ind_{A}a$. Let $I=\left(a_{i}\right)_{i\in\omega}$
    be a universal Morley sequence in $\tp(a/M)$ which exists by \autoref{fct: Forking and dividing in NTP2}. Then $\tp(I/A)$ is the greatest
    element in $S^{\EM,r}(A)$. Indeed, $\varphi(x,a)$ divides over
    $A$ $\Leftrightarrow$ $\varphi(x,a)$ divides over $M$ $\Leftrightarrow$
    $\left\{ \varphi(x,a_{i})\right\} _{i\in\omega}$ is inconsistent.
  \end{enumerate}
\end{proof}
\begin{dfn}
  For $p,q\in S^{\EM}$, we write $p\leq^{\#}q$ if there is an
  array $\left(a_{ij}\right)_{i,j\in\omega}$ such that:
  \begin{itemize}
  \item $\left(a_{ij}\right)_{j\in\omega}\vDash p$ for each $i\in\omega$,
  \item $\left(a_{if(i)}\right)_{i\in\omega}\vDash q$ for each $f:\,\omega\to\omega$.
  \end{itemize}
\end{dfn}
\begin{prp}
  Let $p,q\in S^{\EM}$.
  \begin{enumerate}
  \item If $p\leq^{\div}q$, then $p\leq^{\#}q$.
  \item If $T$ is $\NTP_2$ and $p\leq^{\#}q$, then $p\leq^{\div}q$.
  \end{enumerate}
\end{prp}
\begin{proof}
 \begin{enumerate}
  \item We show by induction that for each $n\in\omega$ we can find
  $\left(\bar{a}_{i}\right)_{i\in n}$ and $\bar{b}$ such that: $\bar{a}_{i}\vDash p$
  and $a_{0j_{0}}+...+a_{(n-1)j_{n-1}}+\bar{b}\vDash q$ for any $j_{0},\ldots,j_{n-1}\in\omega$.
  Assume we have found $\left(\bar{a}_{i}\right)_{i<n}$ and $\bar{b}$,
  without loss of generality $\bar{b}=\bar{b}'+\bar{b}''=\left(b_{i}'\right)_{i\in\omega}+\left(b_{i}''\right)_{i\in\omega}$.
  Consider the type

  \begin{eqnarray*}
    r(\bar{x}_{0}...\bar{x}_{n-1},y,\bar{z}) & = & \bigcup_{i < n}p(\bar{x}_{i})\cup q(\bar{z})\cup\\
    \cup & \bigcup_{j_{0},...,j_{n-1}\in\omega} & \mbox{"\ensuremath{x_{0j_{0}}+x_{1j_{1}}+...+x_{(n-1)j_{n-1}}+y+\bar{z}} is indiscernible"}
  \end{eqnarray*}

  For every finite $r'\subset r$, $\left\{ r'(\bar{x}_{0}...\bar{x}_{n-1},y_{i},\bar{z})\right\} _{i\in\omega}\cup q(\bar{y})$
  is consistent --- since by the inductive assumption $\vDash r'(\bar{a}_{0}...\bar{a}_{n-1},b_{i}',\bar{b}'')$
  for all $i\in\omega$. Together with $p\leq^{\div}q$ this implies
  that $\left\{ r'(\bar{x}_{0}...\bar{x}_{n-1},y_{i},\bar{z})\right\} _{i\in\omega}\cup p(\bar{y})$
  is consistent. By compactness we find $\bar{a}_{0},...,\bar{a}_{n-1},\bar{a}_{n},\bar{b}$
  realizing it, and they are what we were looking for.

  \item Follows from the definition of $\TP_{2}$.
 \end{enumerate}
\end{proof}
\begin{dfn}
  We write $p\leq^{+}q$%
  \footnote{Note that ``$\#$'' and ``$+$'' are supposed to graphically represent
    the combinatorial configuration which we are using in the definition
    of the order.%
  } if there is $\bar{a}=\left(a_{i}\right)_{i\in\mathbb{Z}}\vDash q$
  and $\bar{b}=\left(b_{i}\right)_{i\in\mathbb{Z}}\vDash p$ such that
  $a_{0}=b_{0}$ and $\bar{b}$ is indiscernible over $\left(a_{i}\right)_{i\neq0}$.
\end{dfn}
\begin{rmk}
  In any theory, $p\leq^{\#}q$ implies $p\leq^{+}q$ (and so $p\leq^{\div}q$
  implies $p\leq^{+}q$).
\end{rmk}
\begin{proof}
  If $p\leq^{\#}q$, then by compactness and Ramsey we can find an array
  $\left(c_{ij}\right)_{i,j\in\mathbb{Z}}$ such that:
  \begin{itemize}
  \item $\bar{c}_{i}$ is indiscernible over $\bar{c}_{\neq i}$,
  \item $\left(\bar{c}_{i}\right)_{i\in\mathbb{Z}}$ is an indiscernible sequence,
  \item $\bar{c}_{i}\vDash p$ for all $i\in\omega$,
  \item $\left(c_{if(i)}\right)_{i\in\omega}\vDash q$ for all $f:\,\omega\to\omega$.
  \end{itemize}
  Then take $\bar{a}=\left(c_{0j}\right)_{j\in\mathbb{Z}}$ and $\bar{b}=\left(c_{i0}\right)_{i\in\mathbb{Z}}$.
\end{proof}
It is much less clear, however, if the converse implication holds.
\begin{dfn}
  We say that $T$ is \emph{resilient}%
  \footnote{The term was suggested by Hans Adler as a replacement for ``$\NTP_2$''
    but we prefered to use it for a (possibly) smaller class of theories. %
  } if we cannot find indiscernible sequences $\bar{a}=\left(a_{i}\right)_{i\in\mathbb{Z}}$,
  $\bar{b}=\left(b_{j}\right)_{i\in\mathbb{Z}}$ and a formula $\varphi(x,y)$
  such that:
  \begin{itemize}
  \item $a_{0}=b_{0}$,
  \item $\bar{b}$ is indiscernible over $\left(a_{i}\right)_{i\neq0}$,
  \item $\left\{ \varphi(x,a_{i})\right\} _{i\in \mathbb{Z}}$ is consistent,
  \item $\left\{ \varphi(x,b_{j})\right\} _{j \in \mathbb{Z}}$ is inconsistent.
  \end{itemize}
\end{dfn}
\begin{rmk}\label{rmk: IndexChangeResilience}
\begin{enumerate}
  \item It follows by compactness that we get an equivalent definition replacing
  $\mathbb{Z}$ by $\mathbb{Q}$ for either of $i$ or $j$ (or both), and replacing $\mathbb{Z}$ by $\omega$ for $j$.
\item If $T$ is resilient and $A$ is a set of constants, then $T(A)$ is resilient.
\end{enumerate}
\end{rmk}
\begin{lem}
  \label{lem: Equivalents to NTP2^+} The following are equivalent:
  \begin{enumerate}
  \item $T$ is resilient.
  \item For every $p,q\in S^{\EM}$, $p\leq^{+}q$ implies $p\leq^{\div}q$.
  \item For any indiscernible sequence $\bar{a}=\left(a_{i}\right)_{i\in\mathbb{Z}}$
    and $\varphi(x,y)\in L$, if $\varphi(x,a_{0})$ divides over $\left(a_{i}\right)_{i\neq0}$,
    then $\left\{ \varphi(x,a_{i})\right\} _{i\in\mathbb{Z}}$ is inconsistent.
  \item There is no array $\left(a_{ij}\right)_{i,j\in\omega}$, $\varphi(x,y)\in L$ and $k \in \omega$
    such that $\left\{ \varphi(x,a_{i0})\right\} _{i\in\omega}$ is consistent,
    $\left\{ \varphi(x,a_{ij})\right\} _{j\in\omega}$ is $k$-inconsistent
    for each $i\in\omega$ and $\bar{a}_{i}=\left(a_{ij}\right)_{j\in\omega}$
    is indiscernible over $\left(a_{j0}\right)_{j\neq i}$ for each $i\in\omega$.
  \end{enumerate}
\end{lem}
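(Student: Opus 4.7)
The plan is to establish three blocks of equivalence --- (1) $\Leftrightarrow$ (3), (3) $\Leftrightarrow$ (4), and (1) $\Leftrightarrow$ (2) --- each obtained by unwinding a definition with standard compactness and Ramsey tools.

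For (1) $\Leftrightarrow$ (3), the two statements are essentially reformulations of each other. Given $\bar{a} = (a_i)_{i \in \mathbb{Z}}$ indiscernible with $\varphi(x, a_0)$ dividing over $\bar{a}_{\neq 0}$, the witnessing sequence may be chosen $\bar{a}_{\neq 0}$-indiscernible, say $\bar{b} = (b_j)_{j \in \omega}$ with $b_0 = a_0$ and $\{\varphi(x, b_j)\}_j$ $k$-inconsistent; this is exactly the forbidden resilience configuration, up to $\bar{b}$ being indexed by $\omega$ rather than $\mathbb{Z}$, which is permitted by \autoref{rmk: IndexChangeResilience}. The converse reads a resilience witness straight off, the $\bar{b}$-sequence providing the dividing instance.

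For (3) $\Leftrightarrow$ (4), to show (3) $\Rightarrow$ (4) contrapositively, start with an array $(a_{ij})$ violating (4). A Ramsey-style extraction on the rows produces an indiscernible sequence of rows while preserving all listed properties, since row $i$'s indiscernibility over $(a_{j0})_{j \neq i}$ only involves other rows through their column-$0$ entries. The first column then becomes an indiscernible sequence, which we stretch to $\mathbb{Z}$ by compactness; row $0$ witnesses that $\varphi(x, a_{00})$ divides over $(a_{i0})_{i \neq 0}$, while the column-$0$ conjunction of $\varphi$ remains consistent --- contradicting (3). For the converse, given $\bar{a}$ and $\bar{b}^0$ witnessing the failure of (3), for each $i \in \omega$ use indiscernibility of $\bar{a}$ to obtain an automorphism $\sigma_i$ of the monster with $\sigma_i(a_0) = a_i$ and $\sigma_i(\bar{a}_{\neq 0}) = \bar{a}_{\neq i}$; setting $\bar{b}^i = \sigma_i(\bar{b}^0)$ and $a_{ij} = b^i_j$ yields the array needed to violate (4).

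For (1) $\Leftrightarrow$ (2), the configuration underlying $p \leq^+ q$ is precisely the resilience configuration $(\bar{a}, \bar{b}, a_0 = b_0, \bar{b}$ indiscernible over $\bar{a}_{\neq 0})$, and a formula $\psi \in \cl^{\div}(q) \setminus \cl^{\div}(p)$ translates --- after swapping the roles of $x$ and $y$ to reconcile the parameter-in-$x$ convention of $\cl^{\div}$ with the parameter-in-$y$ convention of resilience --- into a formula witnessing failure of resilience, and conversely. The chief technical subtlety lies in the converse of (3) $\Leftrightarrow$ (4), where the rows $\bar{b}^i$ are built independently via distinct automorphisms: one must note that they assemble into a single array with no further coherence required, since (4) imposes no cross-row conditions beyond column $0$. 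The remaining compactness, Ramsey, and variable-swap bookkeeping arguments are routine.
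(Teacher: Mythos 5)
Your proposal is correct and, modulo the slightly different routing ((3)$\Leftrightarrow$(4) instead of (1)$\Leftrightarrow$(4), which is the same argument given that (1)$\Leftrightarrow$(3) is immediate), it follows essentially the same approach as the paper: unwinding the definition of resilience for (1)$\Leftrightarrow$(2) and (1)$\Leftrightarrow$(3), and the shift-automorphism construction plus a Ramsey/compactness extraction for the array criterion. The two points you flag — that row $i$'s cross-row indiscernibility condition only sees the other rows through their column-0 entries, so it survives the extraction, and that $\cl^{\div}$ substitutes parameters into the first variable slot while the resilience formula carries them in the second — are both real bookkeeping issues that the paper glosses over, and you resolve them correctly.
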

\begin{proof}
  (i) is equivalent to (ii) Assume that $p\leq^{+}q$, i.e. there is
  $\bar{a}=\left(a_{i}\right)_{i\in\mathbb{Z}}\vDash q$ and $\bar{b}=\left(b_{i}\right)_{i\in\mathbb{Z}}\vDash p$
  such that $a_{0}=b_{0}$ and $\bar{b}$ is indiscernible over $\left(a_{i}\right)_{i\neq0}$.
  For any $\varphi\left(x,y\right)$, if $\left\{ \varphi\left(x,b_{i}\right)\right\} _{i\in\omega}$
  is inconsistent, then $\left\{ \varphi\left(x,a_{i}\right)\right\} _{i\in\omega}$
  is inconsistent by resilience, which means precisely that $p\leq^{\div}q$.
  The converse is clear.

  (i) is equivalent to (iii) If $\varphi\left(x,a_{0}\right)$ divides
  over $a_{\neq0}$, then there is a sequence $\left(b_{i}\right)_{i\in\mathbb{Z}}$
  indiscernible over $a_{\neq0}$ and such that $b_{0}=a_{0}$ and $\left\{ \varphi\left(x,b_{i}\right)\right\} _{i\in\mathbb{Z}}$
  is inconsistent. It follows by resilience that $\left\{ \varphi\left(x,a_{i}\right)\right\} _{i\in\mathbb{Z}}$
  is inconsistent. On the other hand, assume that $\left\{ \varphi\left(x,a_{i}\right)\right\} _{i\in\mathbb{Z}}$
  is inconsistent. By compactness we can extend our indiscernible sequence
  to $\bar{a}'+\bar{a}+\bar{a}''=\left(a_{i}'\right)_{i\in\omega^{*}}+\left(a_{i}\right)_{i\in\mathbb{Z}}+\left(a_{i}''\right)_{i\in\omega}$.
  But then $\bar{a}$ witnesses that $\varphi\left(x,a_{0}\right)$
  divides over $\bar{a}'\bar{a}''$. Sending $\bar{a}'$ to $a_{\leq-1}$
  and $\bar{a}''$ to $a_{\geq1}$ by an automorphism fixing $a_{0}$
  we conclude that $\varphi\left(x,a_{0}\right)$ divides over $a_{\neq0}$.

  (i) is equivalent to (iv) Let $\bar{a}$, $\bar{b}$ and $\varphi\left(x,y\right)$
  witness that $T$ is not resilient. Then we let $\bar{a}_{0}=\bar{b}$
  and we let $\bar{a}_{i}$ be an image of $\bar{b}$ under some automorphism
  sending $(\ldots, a_{-1}, a_{0}, a_1, \ldots)$ to $(\ldots, a_{i-1}, a_{i}, a_{i+1}, \ldots)$ by indiscernibility. It follows that
  $\left(a_{ij}\right)_{i,j\in\omega}$ is an array as wanted.

  Conversely, if we have an array as in (iv), by compactness we may assume
  that it is of the form $\left(a_{ij}\right)_{i \in \mathbb{Z},j\in \omega}$
  and that in addition $\left(a_{i0}\right)_{i\in\mathbb{Z}}$ is indiscernible.
  Then $\bar{a}=\left(a_{i0}\right)_{i\in\mathbb{Z}}$, $\bar{b}=\left(a_{0j}\right)_{j\in\omega}$
  and $\varphi\left(x,y\right)$ contradict resilience (in view of \autoref{rmk: IndexChangeResilience}).

\end{proof}
\begin{prp}
  \label{prop: NTP2 vs NTP2^+}
  \begin{enumerate}
  \item If $T$ is $\NIP$, then it is resilient.
  \item If $T$ is simple, then it is resilient.
  \item If $T$ is resilient, then it is $\NTP_2$.
  \end{enumerate}
\end{prp}
\begin{proof}
\begin{enumerate}
  \item Fix $\varphi(x,y)$ and assume that $\left\{ \varphi(x,a_{i})\right\} _{i\in\mathbb{Q}}$
  is consistent. Then by $\NIP$ there is a maximal $k\in\omega$ such
  that $\left\{ \neg\varphi(x,a_{i})\right\} _{i\in s}\cup\left\{ \varphi(x,a_{i})\right\} _{i\notin s}$
  is consistent, for $s=\left\{ 1,2,...,k\right\} \subseteq\mathbb{Q}$.
  Let $d$ realize it. If $\left\{ \varphi(x,b_{i})\right\}_{i \in \mathbb{Q}} $ was inconsistent,
  then we would have $\neg\varphi(d,b_{i})$ for some $i\in \mathbb{Q}$,
  and thus $\left\{ \neg\varphi(x,a_{i})\right\} _{i\in s\cup\{k+1\}}\cup\left\{ \varphi(x,a_{i})\right\} _{i\notin s\cup\{k+1\}}$
  would be consistent, by all the indiscernibility around --- a
  contradiction to the maximality of $k$. Thus, $\left\{ \varphi(x,b_{i})\right\} _{i\in\mathbb{Q}}$
  is consistent.

  \item It is easy to see that $\left(a_{i}\right)_{i>0}$ is a Morley
  sequence over $A=\left(a_{i}\right)_{i<0}$ by finite satisfiability.
  If $\varphi(x,a_{0})$ divides over $a_{\neq0}$, then by Kim's lemma
  $\left\{ \varphi(x,a_{i})\right\} _{i\in\mathbb{Z}}$ is inconsistent.

  \item By Erd\H os-Rado and compactness we can find a strongly indiscernible
  array $\left(c_{ij}\right)_{i,j\in\mathbb{Z}}$ witnessing $\TP_{2}$
  for $\varphi\left(x,y\right)$. Set $a_{i}=c_{i0}$ for $i\in\omega$
  and $b_{j}=b_{0j}$ for $j\in\omega$. Then $\bar{a}$, $\bar{b}$
  and $\varphi\left(x,y\right)$ witness that $T$ is not resilient.
\end{enumerate}
\end{proof}
\begin{clm}
  \label{clm: universal if independent enough} Let $T$ be resilient,
  $A$ an extension base, and let $\bar{a}=(a_{i})_{i\in\mathbb{Z}}$
  be indiscernible over $A$, say in and $r=\tp(a_{0}/A)\in S(A)$.
  Then the following are equivalent:
  \begin{enumerate}
  \item The EM type $\tp^{\EM}(\bar{a}/A)$ is $\leq_{A}^{\div}$-greatest
    in $S^{\EM,r}(A)$.
  \item $\tp(a_{\neq0}/a_{0}A)$ does not divide over $A$.
  \end{enumerate}
\end{clm}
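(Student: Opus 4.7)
My plan is to first translate condition (1) into the more familiar statement that $\bar{a}$ is a universal Morley sequence in $r$ over $A$ in the sense of \autoref{fct: Forking and dividing in NTP2}. Unwinding the definition of $\cl^{\div}$, a formula $\varphi(x,y)\in L(A)$ lies in $\cl^{\div}(\tp^{\EM}(\bar{a}/A))$ iff $\{\varphi(a_i,y)\}_{i\in\omega}$ is consistent, which after renaming $\psi(y,x)=\varphi(x,y)$ amounts to $\bar{a}$ not witnessing any dividing of $\psi(y,a_0)$ over $A$. Since being $\leq_A^{\div}$-greatest means having the smallest closure, (1) unpacks precisely to: every $\psi(y,a_0)\in L(A)$ that divides over $A$ is witnessed to do so by $\bar{a}$, i.e.\ $\bar{a}$ is universal.

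With this translation the direction (1)$\Rightarrow$(2) is essentially formal. Assume towards a contradiction that $\tp(a_{\neq 0}/a_0 A)$ divides over $A$, and pick $\chi(y,a_0)$ in that type, with $y$ a finite tuple, that divides over $A$. As $\chi(y,a_0)$ is realized by a finite subtuple $\bar{b}$ of $a_{\neq 0}\subseteq a_{\neq 0}A$, the constant $y=\bar{b}$ automatically witnesses consistency of $\{\chi(y,a_0^{(j)})\}_{j}$ for every $a_{\neq 0}A$-indiscernible sequence $(a_0^{(j)})_j$ through $a_0$, so $\chi(y,a_0)$ does not divide over $a_{\neq 0}A$. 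On the other hand, universality of $\bar{a}$ gives that $\{\chi(y,a_i)\}_{i\in\mathbb{Z}}$ is inconsistent, and then by resilience, in the form of condition (iii) of \autoref{lem: Equivalents to NTP2^+} applied over $A$ via \autoref{rmk: IndexChangeResilience}(2), $\chi(y,a_0)$ must divide over $a_{\neq 0}A$ -- a contradiction.

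For (2)$\Rightarrow$(1), let $\varphi(y,a_0)\in L(A)$ divide over $A$ and suppose for contradiction that $\{\varphi(y,a_i)\}_{i\in\omega}$ is consistent; by compactness and indiscernibility the same holds for $i\in\mathbb{Z}$, and by resilience $\varphi(y,a_0)$ does not divide over $a_{\neq 0}A$. It suffices to combine this with (2) to deduce that $\varphi(y,a_0)$ does not divide over $A$, contradicting the hypothesis. Since $A$ is an extension base, pick a model $M\supseteq A$ with $M\ind^d_A\bar{a}$; by standard forking calculus in $\NTP_2$ over extension bases $\bar{a}$ remains $M$-indiscernible, $a_{\neq 0}\ind^d_M a_0$, $\varphi(y,a_0)$ still divides over $M$, and $\varphi(y,a_0)$ still does not divide over $a_{\neq 0}M$. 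Given any $M$-indiscernible witness $(a_0^{(j)})_{j\in\omega}$ of the dividing over $M$, use $a_{\neq 0}\ind^d_M a_0$ and the extension property over $M$ to pick $c$ with $c\,a_0^{(j)}\equiv_M a_{\neq 0}\,a_0$ for all $j$ and $c$ realizing a global $M$-invariant extension of $\tp(a_{\neq 0}/M)$; standard invariance then forces $(a_0^{(j)})_j$ to be $Mc$-indiscernible. By conjugation each $\varphi(y,a_0^{(j)})$ does not divide over $Mc$, and the chain condition (\autoref{thm: NTP2 implies Chain Condition}) over $Mc$, which is an extension base by the choice of $c$, yields that $\{\varphi(y,a_0^{(j)})\}_{j\in\omega}$ is consistent -- contradicting the choice of $(a_0^{(j)})_j$ as a witness to dividing.

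The hardest step I expect is the $\NTP_2$ forking-calculus bookkeeping at the end of (2)$\Rightarrow$(1): transferring $a_{\neq 0}\ind^d_A a_0$ and the non-dividing of $\varphi(y,a_0)$ over $a_{\neq 0}A$ from $A$ up to the model $M$, choosing $c$ so that $Mc$ is itself an extension base, and preserving $Mc$-indiscernibility of $(a_0^{(j)})_j$ after the choice of $c$. The conceptual content outside of this bookkeeping is light: the ``free'' non-dividing of $\chi(y,a_0)$ over $a_{\neq 0}A$ coming from its realization inside $a_{\neq 0}A$, together with resilience, runs the first direction, while the chain condition provides the reverse.
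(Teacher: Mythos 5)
Your translation of condition (1) into ``$\bar{a}$ is a universal Morley sequence in $r$'' is correct and is also the paper's implicit starting point, and the (1)$\Rightarrow$(2) direction is essentially right (with one caveat: the step ``$\{\chi(y,a_i)\}_{i\in\mathbb{Z}}$ inconsistent implies $\chi(y,a_0)$ divides over $a_{\neq 0}A$'' is \emph{not} condition (iii) of \autoref{lem: Equivalents to NTP2^+} --- (iii) is the converse implication; what you want is a fact that holds in \emph{any} theory, proved by a shift-automorphism argument inside the paper's proof of that lemma. Your argument is valid but the citation is off. The paper instead shows directly, by extending $\bar{a}$ on both sides and taking realizations from the far ends, that $\{\chi(y,a_i)\}_{i\in\mathbb{Z}}$ is consistent and then applies (1).)

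The real issue is in (2)$\Rightarrow$(1). After reducing to the lemma ``$\varphi(y,a_0)$ does not divide over $a_{\neq 0}A$ and $a_{\neq 0}\ind^d_A a_0$ imply $\varphi(y,a_0)$ does not divide over $A$,'' you launch into a heavy $\NTP_2$-specific detour via a model $M$, a globally $M$-invariant $c$, and the chain condition over $Mc$. This lemma (equivalently its contrapositive, which is what the paper actually states: dividing over $A$ together with $a_{\neq 0}\ind^d_A a_0$ gives dividing over $a_{\neq 0}A$) is an elementary forking-calculus fact valid in \emph{every} theory: take an $A$-indiscernible dividing witness $(a_0^{(j)})_j$ through $a_0$; since $\tp(a_{\neq 0}/a_0A)$ does not divide over $A$, apply \autoref{lem: make indiscernible if consistent} to replace $(a_0^{(j)})_j$ by a conjugate over $a_0A$ that is $a_{\neq 0}A$-indiscernible and still $k$-inconsistent. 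No $\NTP_2$, no extension base, no model $M$. Moreover several steps in your detour are genuinely unjustified: it is not clear that $a_{\neq 0}\ind^d_M a_0$ follows from $M\ind_A\bar{a}$ (that transfer statement applies to formulas over $\bar{a}M$ in a fresh variable, not to types of pieces of $\bar{a}$); the chosen $c$ realizing an arbitrary global $M$-invariant extension of $\tp(a_{\neq 0}/M)$ need not satisfy $c\,a_0^{(j)}\equiv_M a_{\neq 0}\,a_0$ (you would need the invariant type to restrict to $\tp(a_{\neq 0}/Ma_0)$ over $Ma_0$, which an arbitrary extension does not do); and the assertion that $Mc$ is an extension base needs a citation. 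The paper's own argument dispatches this direction in two lines and does not invoke the extension-base hypothesis at all.
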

\begin{proof}
  We may assume that $A=\emptyset$.

  (i) implies (ii) in any theory: Let $\vDash\varphi(a_{\neq0},a_{0})$.
  By indiscernibility and compactness $\left\{ \varphi(x,a_{i})\right\} _{i\in\mathbb{Z}}$
  is consistent, so by (i) $\varphi(x,a_{0})$ does not divide.

  (ii) implies (i): Assume that $\varphi(x,a_{0})$ divides. As $\tp(a_{\neq0}/a_{0})$
  does not divide, it follows that $\varphi(x,a_{0})$ divides over
  $a_{\neq0}$. But then by \autoref{lem: Equivalents to NTP2^+}(iii)
  we have that $\left\{ \varphi(x,a_{i})\right\} _{i\in\mathbb{Z}}$
  is inconsistent, hence (i).
\end{proof}
\begin{rmk}
  Similar observation in the context of NIP theories based on \cite{Shelah:DependentContinued}
  is made in \cite{Kaplan-Usvyatsov:StrictIndependence}.
\end{rmk}
Recall that a theory is called \emph{low} if for every formula $\varphi\left(x,y\right)$
there is $k\in\omega$ such that for any indiscernible sequence $\left(a_{i}\right)_{i\in\omega}$,
$\left\{ \varphi\left(x,a_{i}\right)\right\} _{i\in\omega}$ is consistent
if and only if it is $k$-consistent. The following is a generalization
of \cite[Lemma 2.3]{BenYaacov-Pillay-Vassiliev:LovelyPairs}.
\begin{prp}
  \label{prop: lowness} Let $T$ be resilient. Then the following are
  equivalent:
  \begin{enumerate}
  \item $\varphi(x,y)$ is low.
  \item The set $\left\{ \left(c,d\right):\varphi(x,c)\mbox{ divides over }d\right\} $
    is type-definable (where $d$ is allowed to be of infinite length).
  \end{enumerate}
\end{prp}
\begin{proof}
  (i) implies (ii) holds in any theory, and we show that (ii) implies
  (i).

  Assume that $\varphi\left(x,y\right)$ is not low. Then for every
  $i\in\omega$ we have a sequence $\bar{a}_{i}=\left(a_{ij}\right)_{j\in\mathbb{Z}}$
  such that $\left\{ \varphi\left(x,a_{ij}\right)\right\} _{j\in\mathbb{Z}}$
  is $i$-consistent, but inconsistent. In particular $\varphi\left(x,a_{i0}\right)$
  divides over $\left(a_{ij}\right)_{j\neq0}$ for each $i$.

  If (ii) holds, then by compactness we can find a sequence $\bar{a}=\left(a_{j}\right)_{j\in\omega}$
  such that $\left\{ \varphi\left(x,a_{j}\right)\right\} _{j\in\omega}$
  is consistent and $\varphi\left(x,a_{0}\right)$ still divides over $a_{\neq0}$.
  But this is a contradiction to resilience by \autoref{lem: Equivalents to NTP2^+}(iii).
\end{proof}
However, the main question remains unresolved:
\begin{qst}
  \begin{enumerate}
  \item Does $\NTP_2$ imply resilience?
  \item Is resilience preserved under reducts?
  \item Does type-definability of dividing imply lowness in $\NTP_2$ theories?
  \end{enumerate}
\end{qst}

\section{On a strengthening of strong theories}

Recently several attempts have been made to define weight outside
of the familiar context of simple theories. First Shelah had defined
strongly dependent theories and several notions of dp-rank in \cite{Shelah:DependentContinued,Shelah:StronglyDependent}.
The study of dp-rank was continued in \cite{Onshuus-Usvyatsov:dpMinimality}. After
that Adler \cite{Adler:Burden} had introduced \emph{burden}, a notion
based on the invariant $\kappa_{\inp}$ of Shelah \cite{Shelah:ClassificationTheory}
which generalizes simultaneously dp-rank in NIP theories and weight
in simple theories. In this section we are going to add yet another
version of measuring weight. First we recall the notions mentioned
above.

For notational convenience we consider an extension $\card$ of the
linear order on cardinals by adding a new maximal element $\infty$
and replacing every limit cardinal $\kappa$ by two new elements $\kappa_{-}$
and $\kappa_{+}$. The standard embedding of cardinals into $\card$
identifies $\kappa$ with $\kappa_{+}$. In the following, whenever
we take a supremum of a set of cardinals, we will be computing it
in $\card$.
\begin{dfn}
  \cite{Adler:Burden} Let $p\left(x\right)$ be a (partial) type.
  \begin{enumerate}
  \item An $\inp$-pattern of depth $\kappa$ in $p(x)$ consists of $\left(\bar{a}_{i},\varphi_{i}(x,y_{i}),k_{i}\right)_{i\in\kappa}$
    with $\bar{a}_{i}=\left(a_{ij}\right)_{j\in\omega}$ and $k_{i}\in\omega$
    such that:

    \begin{itemize}
    \item $\left\{ \varphi_{i}(x,a_{ij})\right\} _{j\in\omega}$ is $k_{i}$-inconsistent
      for every $i\in\kappa$,
    \item $p(x)\cup\left\{ \varphi_{i}(x,a_{if(i)})\right\} _{i\in\kappa}$
      is consistent for every $f:\,\kappa\to\omega$.
    \end{itemize}
  \item The \emph{burden} of a partial type $p(x)$ is the supremum (in Card$^{*}$)
    of the depths of $\inp$-patterns in it. We denote the burden of $p$
    as $\bdn(p)$ and we write $\bdn(a/A)$ for $\bdn(\tp(a/A))$.
  \item We get an equivalent definition by taking supremum only over $\inp$-patterns
    with mutually indiscernible rows.
  \item It is easy to see by compactness that $T$ is $\NTP_2$ if and only
    if $\bdn\left(\mbox{"}x=x\mbox{"}\right)<\infty$, if and only if
    $\bdn\left(\mbox{"}x=x\mbox{"}\right)<\left|T\right|^{+}$.
  \item A theory $T$ is called \emph{strong} if $\bdn\left(p\right)\leq\left(\aleph_{0}\right)_{-}$
    for every finitary type $p$ (equivalently, there is no $\inp$-pattern
    of infinite depth). Of course, if $T$ is strong then it is $\NTP_2$.
  \end{enumerate}
\end{dfn}
\begin{fct}
  \label{fct: burden in simple and NIP}\cite{Adler:Burden}
  \begin{enumerate}
  \item Let $T$ be $\NIP$. Then $\bdn(p)=\dprk(p)$ for any $p$.
  \item Let $T$ be simple. Then the burden of $p$ is the supremum of weights
    of its complete extensions.
  \end{enumerate}
\end{fct}

Some basics of the theory of burden were developed by the second author
in \cite{Chernikov:NTP2}.
\begin{fct}
  \label{fac: burden is sub-multiplicative}\cite{Chernikov:NTP2} Let $T$
  be an arbitrary theory.
  \begin{enumerate}
  \item The following are equivalent:

    \begin{enumerate}
    \item $\bdn(p)<\kappa$.
    \item For any $\left(\bar{a}_{i}\right)_{i\in\kappa}$ mutually indiscernible
      over $A$ and $b\vDash p$, there is some $i\in\kappa$ and $\bar{a}_{i}'$
      such that $\bar{a}_{i}'$ is indiscernible over $bA$ and $\bar{a}_{i}'\equiv_{Aa_{i0}}\bar{a}_{i}$.
    \end{enumerate}
  \item Assume that $\bdn(a/A)<\kappa$ and $\bdn(b/aA)<\lambda$, with $\kappa$
    and $\lambda$ finite or infinite cardinals. Then $\bdn(ab/A)<\kappa\times\lambda$.
  \item In particular, in the definition of strong (or $\NTP_2$) it is
    enough to look at types in one variable.
  \end{enumerate}
\end{fct}
In \cite{Kaplan-Onshuus-Usvyatsov:dpRank} it is proved that dp-rank is sub-additive,
so burden in NIP theories is sub-additive as well. The sub-additivity
of burden in simple theories follows from \autoref{fct: burden in simple and NIP}
and the sub-additivity of weight in simple theories. It thus becomes
natural to wonder if burden is sub-additive in general, or at least
in $\NTP_2$ theories.

~

Now we are going to define a refinement of the class of strong theories.
\begin{dfn}
  Let $p\left(x\right)$ be a partial type.
  \begin{enumerate}
  \item An $\inp^{2}$-pattern of depth $\kappa$ in $p\left(x\right)$ consists
    of formulas $\left(\varphi_{i}(x,y_{i},z_{i})\right)_{i\in\kappa}$,
    mutually indiscernible sequences $\left(\bar{a}_{i}\right)_{i\in\kappa}$
    and $b_{i}\subseteq\bigcup_{j<i}\bar{a}_{j}$ such that:

    \begin{enumerate}
    \item $\left\{ \varphi_{i}(x,a_{i0},b_{i})\right\} _{i\in\omega}\cup p\left(x\right)$
      is consistent,
    \item $\left\{ \varphi_{i}(x,a_{ij},b_{i})\right\} _{j\in\omega}$ is inconsistent
      for every $i\in\omega$.
    \end{enumerate}
  \item An $\inp^{3}$-pattern of depth $\kappa$ in $p\left(x\right)$ is
    defined exactly as an $\inp^{2}$-pattern of depth $\kappa$, but
    allowing $b_{i}\subseteq\bigcup_{j\in\kappa,j\neq i}\bar{a}_{j}$.
    It is then clear that every $\inp^{2}$-pattern is an $\inp^{3}$-pattern
    of the same depth, but the opposite is not true.
  \item The \emph{burden$^{2}$} (\emph{burden}$^{3}$) of a partial type
    $p(x)$ is the supremum (in Card$^{*}$) of the depths of $\inp^{2}$-patterns
    (resp. $\inp^{3}$-patterns) in it. We denote the burden$^{2}$ of
    $p$ as $\bdn^{2}(p)$ and we write $\bdn^{2}(a/A)$ for $\bdn^{2}(\tp(a/A))$
    (and similarly for $\bdn^{3}$).
  \item A theory $T$ is called \emph{strong$^{2}$} if $\bdn^{2}\left(p\right)\leq\left(\aleph_{0}\right)_{-}$
    for every finitary type $p$ (that is, there is no $\inp^{2}$-pattern
    of infinite depth). Similarly for \emph{strong}$^{3}$.
  \end{enumerate}
\end{dfn}

In the following proposition we sum up some of the properties of $\bdn^{2}$
and $\bdn^{3}$.
\begin{prp}
  \label{prop: burden^2 summary}
  \begin{enumerate}
  \item \label{enu: bdn inequalities} For any partial type $p\left(x\right)$,
    $\bdn\left(p\right)\leq\bdn^{2}\left(p\right)\leq\bdn^{3}\left(p\right)$.
  \item \label{enu: Strong-implies-strong2} Strong$^{3}$ implies strong$^{2}$
    implies strong.
  \item \label{enu: strong2 =00003D strong3} In fact, $T$ is strong$^{2}$
    if and only if it is strong$^{3}$.
  \item \label{enu: strongly2 dependent}$T$ is strongly$^{2}$ dependent
    if and only if it is NIP and strong$^{2}$ (we recall from \cite[Definition~2.2]{Kaplan-Shelah:ChainConditionsInDependentGroups}
    that $T$ is called strongly$^{2}$ dependent when there are no $\left(\varphi_{i}\left(x,y_{i},z_{i}\right),\,\bar{a}_{i}=\left(a_{ij}\right)_{j\in\omega},\, b_{i}\subseteq\bigcup_{j<i}\bar{a}_{j}\right)_{i\in\omega}$
    such that $\left(\bar{a}_{i}\right)_{i\in\omega}$ are mutually indiscernible
    and the set $\left\{ \varphi_{i}\left(x,a_{i0},b_{i}\right)\land\neg\varphi_{i}\left(x,a_{i1},b_{i}\right)\right\} _{i\in\omega}$
    is consistent.).
  \item \label{enu: supersimple} If $T$ is supersimple, then it is strong$^{2}$.
  \item \label{enu: strong2 not supersimple} There are strong$^{2}$ stable
    theories which are not superstable.
  \item \label{enu: strong, not strong2} There are strong stable theories
    which are not strong$^{2}$.
  \item \label{enu: NTP2 iff bdd burden3} We still have that $T$ is $\NTP_2$
    if and only if every finitary type has bounded burden$^{3}$.
  \end{enumerate}
\end{prp}
\begin{proof}
  \ref{enu: bdn inequalities} is immediate by comparing the definitions,
  and \ref{enu: Strong-implies-strong2} follows from \ref{enu: bdn inequalities}.

 \ref{enu: strong2 =00003D strong3} Assume that $T$ is not strong$^{3}$,
  witnessed by $\left(\varphi_{i}(x,y_{i},z_{i}),\bar{a}_{i},b_{i}\right)_{i\in\omega}$.
  For $i\in\omega$, let $f\left(i\right)$ be the smallest $j\in\omega$
  such that $b_{i}\in\bar{a}_{<j}$. Now for $i\in\omega$ we define
  inductively:
  \begin{itemize}
  \item $\alpha_{0}=0$, $\alpha_{i+1}=f\left(\alpha_{i}\right)$,
  \item $b_{i}'=b_{\alpha_{i}}\cap\bar{a}_{\in\left\{ \alpha_{0},\alpha_{1},\ldots,\alpha_{i-1}\right\} }$
    and $b_{i}''=b_{\alpha_{i}}\cap\bar{a}_{\in\left\{ 0,1,\ldots,\alpha_{i+1}-1\right\} \setminus\left\{ \alpha_{0},\alpha_{1},\ldots,\alpha_{i}\right\} }$,
    so we may assume that $b_{\alpha_{i}}=b_{i}''b_{i}'$.
  \item $a_{ij}'=a_{\alpha_{i}j}b_{i}''$ for $j\in\omega$,
  \item $\varphi'_{i}\left(x,a_{ij}',b_{i}'\right)=\varphi_{i}\left(x,a_{ij},b_{i}\right)$.
  \end{itemize}
  It is now easy to check that $\left(\bar{a}_{i}'\right)_{i\in\omega}$
  are mutually indiscernible, $b_{i}'\in\bar{a}_{<i}'$, $\left\{ \varphi'_{i}\left(x,a_{i0}',b_{i}'\right)\right\} _{i\in\omega}$
  is consistent and $\left\{ \varphi'_{i}\left(x,a_{ij}',b_{i}'\right)\right\} _{j\in\omega}$
  is inconsistent for every $i\in\omega$. This gives us an $\inp^{2}$-pattern
  of infinite depth, witnessing that $T$ is not strong$^{2}$.

  \ref{enu: strongly2 dependent} Let $\left(\varphi_{i}\left(x,y_{i},z_{i}\right),\bar{a}_{i},b_{i}\right)_{i\in\omega}$
  witness that $T$ is not strong$^{2}$ and let $c\vDash\left\{ \varphi_{i}(x,a_{i0},b_{i})\right\} _{i\in\omega}$,
  it follows from the inconsistency of $\left\{ \varphi\left(x,a_{ij},b_{i}\right)\right\} _{j\in\omega}$'s
  that for each $i\in\omega$ there is some $k_{i}\in\omega$ such that
  $c\vDash\left\{ \varphi_{i}(x,a_{i0},b_{i})\land\neg\varphi_{i}\left(x,a_{ik_{i}},b_{i}\right)\right\} _{i\in\omega}$.
  Define $a_{ij}'=a_{i,k_{i}\times j}a_{i,k_{i}\times j+1}\ldots a_{i,k_{i}\times\left(j+1\right)-1}$
  and $\varphi'\left(x,a_{ij}',b_{i}\right)=\varphi\left(x,a_{i,k_{i}\times j},b_{i}\right)$.
  Then $\left(\bar{a}_{i}'\right)_{i\in\omega}$ are mutually indiscernible,
  $b_{i}\in\bigcup_{j<i}\bar{a}_{j}'$ and $c\vDash\left\{ \varphi_{i}\left(x,a_{i0}',b_{i}\right)\land\neg\varphi_{i}\left(x,a_{i1}',b_{i}\right)\right\} _{i\in\omega}$
  --- witnessing that $T$ is not strongly$^{2}$ dependent.

  On the other hand, let $\left(\varphi_{i}\left(x,y_{i},z_{i}\right),\bar{a}_{i},b_{i}\right)_{i\in\omega}$
  witness that $T$ is not strongly$^{2}$ dependent and assume that
  $T$ is NIP. Let $\varphi'_{i}\left(x,y_{i}',z_{i}\right)=\varphi_{i}\left(x,y_{i}^{0},z_{i}\right)\land\neg\varphi_{i}\left(x,y_{i}^{1},z_{i}\right)$,
  $a_{ij}'=a_{i\left(2j\right)}a_{i\left(2j+1\right)}$ for all $i,j\in\omega$.
  We then have that $\left(\bar{a}_{i}'\right)_{i\in\omega}$ are still
  mutually indiscernible and $b_{i}\in\bigcup_{j<i}\bar{a}'$, $\left\{ \varphi'_{i}\left(x,a_{i0}',b_{i}\right)\right\} _{i\in\omega}$
  is consistent and $\left\{ \varphi_{i}'\left(x,a_{ij}',b_{i}\right)\right\} _{j\in\omega}$
  is inconsistent (otherwise let $c$ realize it, it follows that $\varphi_{i}\left(c,a_{ij},b_{i}\right)$
  holds if and only if $j$ is even, contradicting NIP). But this shows
  that $T$ is not strong$^{2}$.

  \ref{enu: supersimple} Let $T$ be supersimple, and assume that
  $T$ is not strong$^{2}$, witnessed by $\left(\varphi_{i}\left(x,y_{i},z_{i}\right),\bar{a}_{i},b_{i}\right)_{i\in\omega}$
  and let $A=\bigcup_{i,j\in\omega}a_{ij}$. Let $c\vDash\left\{ \varphi_{i}(x,a_{i0},b_{i})\right\} _{i\in\omega}$.
  By supersimplicity, there has to be some finite $A_{0}\subset A$
  such that $\tp\left(c/A\right)$ does not divide over $A_{0}$. It
  follows that there is some $i'\in\omega$ such that $A_{0}\subset\bigcup_{i<i',j\in\omega}a_{ij}$.
  But then $c\vDash\varphi_{i'}\left(x,a_{i'0},b_{i'}\right)$, $\left(a_{i'j}b_{i'}\right)_{j\in\omega}$
  is indiscernible over $A_{0}$ and $\left\{ \varphi\left(x,a_{i'j},b_{i'}\right)\right\} _{j\in\omega}$
  is inconsistent, so $\tp\left(c/A\right)$ divides over $A_{0}$ ---
  a contradiction.

  \ref{enu: strong2 not supersimple} It is easy to see that the theory
  of an infinite family of refining equivalence relations with infinitely
  many infinite classes satisfies the requirement.

  \ref{enu: strong, not strong2} In \cite[Example~2.5]{Shelah:StronglyDependent} Shelah
  gives an example of a strongly stable theory which is not strongly$^{2}$
  stable. In view of (3) this is sufficient. Besides, there are examples
  of NIP theories of burden 1 which are not strongly$^{2}$ dependent
  (e.g. $\left(\mathbb{Q}_{p},+,\cdot,0,1\right)$ or $\left(\mathbb{R},<,+,\cdot,0,1\right)$).

  \ref{enu: NTP2 iff bdd burden3} We remind the statement of Fodor's
  lemma.

  \textbf{Fact} (Fodor's lemma). If $\kappa$ is a regular, uncountable
  cardinal and $f:\,\kappa\to\kappa$ is such that $f(\alpha)<\alpha$
  for any $\alpha\neq0$, then there is some $\gamma$ and some stationary
  $S\subseteq\kappa$ such that $f(\alpha)=\gamma$ for any $\alpha\in S$.

  If $T$ has $\TP_{2}$, then clearly $\bdn^{3}\left(T\right)=\infty$,
  and we prove the converse. Assume that $\bdn^{3}\left(T\right)\geq\left|T\right|^{+}$
  and let $\kappa=\left|T\right|^{+}$. Then we can find $\left(\varphi_{i}\left(x,y_{i},z_{i}\right),\bar{a}_{i},b_{i}\right)_{i\in\kappa}$
  with $\left(\bar{a}_{i}\right)_{i\in\kappa}$ mutually indiscernible,
  finite $b_{i}\in\bigcup_{j\in\kappa,j\neq i}\bar{a}_{j}$ such that
  $\left\{ \varphi_{i}(x,a_{i0},b_{i})\right\} _{i\in\kappa}$ is consistent
  and $\left\{ \varphi_{i}(x,a_{ij},b_{i})\right\} _{j\in\omega}$ is
  inconsistent for every $i\in\kappa$. For each $i\in\kappa$, let
  $f\left(i\right)$ be the largest $j<i$ such that $\bar{a}_{j}\cap b_{i}\neq\emptyset$
  and let $g\left(i\right)$ be the largest $j\in\kappa$ such that
  $\bar{a}_{j}\cap b_{i}\neq\emptyset$. By Fodor's lemma there is some
  stationary $S\subseteq\kappa$ and $\gamma\in\kappa$ such that $f(i)=\gamma$
  for all $i\in S$.

  By induction we choose an increasing sequence $\left(i_{\alpha}\right)_{\alpha\in\kappa}$
  from $S$ such that $i_{0}>\gamma$ and $i_{\alpha}>g(i_{\beta})$
  for $\beta<\alpha$. Now let $a_{\alpha j}'=a_{i_{\alpha}j}b_{i_{\alpha}}$
  and $\varphi_{\alpha}'\left(x,y_{\alpha}'\right)=\varphi_{i_{\alpha}}\left(x,y_{i_{\alpha}},z_{i_{\alpha}}\right)$.
  It follows by the choice of $i_{\alpha}$'s that $\left(\bar{a}_{\alpha}'\right)_{\alpha\in\kappa}$
  are mutually indiscernible, $\left\{ \varphi_{\alpha}'\left(x,a_{\alpha0}'\right)\right\} _{\alpha\in\kappa}$
  is consistent and $\left\{ \varphi_{\alpha}'\left(x,a_{\alpha j}'\right)\right\} _{j\in\omega}$
  is inconsistent for each $\alpha\in\kappa$. It follows that we had
  found an $\inp$-pattern of depth $\kappa=\left|T\right|^{+}$ ---
  so $T$ has $\TP_{2}$.
\end{proof}
We are going to give an analogue of Fact \ref{fac: burden is sub-multiplicative}(1)
for burden$^{2,3}$, but first a standard lemma.
\begin{lem}
  \label{lem: make indiscernible if consistent} Let $\bar{a}=\left(a_{i}\right)_{i\in\omega}$
  be indiscernible over $A$ and let $p(x,a_{0})=\tp(c/a_{0}A)$. Assume
  that $\left\{ p(x,a_{i})\right\} _{i\in\omega}$ is consistent. Then
  there is $\bar{a}'\equiv_{a_{0}A}\bar{a}$ which is indiscernible
  over $cA$.
\end{lem}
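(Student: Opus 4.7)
The plan is to extend $\bar{a}$ to a long $A$-indiscernible sequence, realize all the $p(x,a_\alpha)$ simultaneously by a single element, extract an indiscernible subsequence by Erdős--Rado, and then transport everything by an $A$-automorphism of the monster. The whole argument is a completely standard use of the usual compactness-plus-extraction technique, and there is no real obstacle.

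First I would fix a realization $c'$ of $\{p(x,a_i)\}_{i<\omega}$ and, by compactness, extend $\bar{a}$ to an $A$-indiscernible sequence $(a_\alpha)_{\alpha<\kappa}$ with $\kappa$ large enough for Erdős--Rado. Since every finite subsequence of $(a_\alpha)_{\alpha<\kappa}$ has the same type over $A$ as a finite initial segment of $\bar{a}$, and $c'$ realizes $p(x,a_i)$ for every $i<\omega$, a further compactness step produces a single $c''$ realizing $p(x,a_\alpha)$ for every $\alpha<\kappa$.

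Second, I would apply Erdős--Rado to extract from $(a_\alpha)_{\alpha<\kappa}$ an infinite subsequence $\bar{b}=(b_k)_{k<\omega}$ that is indiscernible over $Ac''$. Being a subsequence of an $A$-indiscernible sequence, $\bar{b}$ shares its EM-type over $A$ with $\bar{a}$; in particular $c''b_0\equiv_A c a_0$.

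Third, I would transport by homogeneity of the monster: let $\sigma$ be an $A$-automorphism sending $c''b_0$ to $c a_0$, and set $\bar{a}':=\sigma(\bar{b})$. Then $\bar{a}'$ begins with $a_0$, is indiscernible over $cA$, and has the same EM-type over $A$ as $\bar{a}$. Since the type of an $A$-indiscernible sequence as an infinite tuple over $A$ is determined by its EM-type, $\tp(\bar{a}/A)=\tp(\bar{a}'/A)$; any $A$-automorphism witnessing this equality must fix the shared first coordinate $a_0$, yielding $\bar{a}'\equiv_{a_0 A}\bar{a}$ as required. The only subtlety worth flagging is precisely this last observation: matching EM-type over $A$ together with a common first coordinate already forces equality of type over $a_0A$.
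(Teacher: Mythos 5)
The paper states this lemma without proof (introducing it simply as ``a standard lemma''), and your argument is exactly the standard one: realize the whole family by compactness after stretching, extract an $Ac''$-indiscernible sequence by Erd\H{o}s--Rado, and transport by an $A$-automorphism, with the final observation that equality of type over $A$ plus a common first coordinate gives equality over $a_0A$. Your proof is correct and fills in precisely what the paper leaves tacit.
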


\begin{lem}
  Let $p\left(x\right)$ be a partial type over $A$:
  \begin{enumerate}
  \item The following are equivalent:

    \begin{enumerate}
    \item $\bdn^{3}\left(p\right)<\kappa$.
    \item For any $\left(\bar{a}_{i}\right)_{i\in\kappa}$ mutually indiscernible
      over $A$ and $c\vDash p\left(x\right)$ there is some $i\in\kappa$
      and $\bar{a}_{i}'$ such that:

      \begin{itemize}
      \item $\bar{a}_{i}'\equiv_{a_{i0}\bar{a}_{\neq i}A}\bar{a}_{i}$,
      \item $\bar{a}_{i}'$ is indiscernible over $c\bar{a}_{\neq i}A$.
      \end{itemize}
    \end{enumerate}
  \item The following are equivalent:

    \begin{enumerate}
    \item $\bdn^{2}\left(p\right)<\kappa$.
    \item For any $\left(\bar{a}_{i}\right)_{i\in\kappa}$ mutually indiscernible
      over $A$ and $c\vDash p\left(x\right)$ there is some $i\in\kappa$
      and $\bar{a}_{i}'$ such that:

      \begin{itemize}
      \item $\bar{a}_{i}'\equiv_{a_{i0}\bar{a}_{<i}A}\bar{a}_{i}$,
      \item $\bar{a}_{i}'$ is indiscernible over $c\bar{a}_{<i}A$.
      \end{itemize}
    \end{enumerate}
  \end{enumerate}
\end{lem}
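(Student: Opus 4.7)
The plan is to prove both equivalences in parallel, imitating the argument of Fact~\ref{fac: burden is sub-multiplicative}\ref{enu: bdn inequalities} for ordinary burden. The key tool is \autoref{lem: make indiscernible if consistent}, which (in contrapositive form, together with compactness) says: if $\bar{a}=(a_j)_{j\in\omega}$ is indiscernible over $B$ and no $\bar{a}'\equiv_{a_0 B}\bar{a}$ is indiscernible over $cB$, then there exists a formula $\varphi(x,a_0,b)$ with $b\subseteq B$ finite such that $\vDash\varphi(c,a_0,b)$ and $\{\varphi(x,a_j,b)\}_{j\in\omega}$ is inconsistent. I will apply this with $B=\bar{a}_{\neq i}A$ (for the $\bdn^{3}$ statement) or $B=\bar{a}_{<i}A$ (for the $\bdn^{2}$ statement), absorbing any $A$-parameters into the formula $\varphi_i\in L(A)$.

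For (a)$\Rightarrow$(b) in part (1), suppose (b) fails. Then there exist mutually $A$-indiscernible $(\bar{a}_i)_{i\in\kappa}$ and $c\vDash p(x)$ such that for every $i\in\kappa$ no $\bar{a}_i'$ as required exists. By mutual indiscernibility, $\bar{a}_i$ is indiscernible over $\bar{a}_{\neq i}A$, so applying the tool above with $B=\bar{a}_{\neq i}A$ yields a formula $\varphi_i(x,y_i,z_i)\in L(A)$ and a finite $b_i\subseteq\bar{a}_{\neq i}$ with $\vDash\varphi_i(c,a_{i0},b_i)$ and $\{\varphi_i(x,a_{ij},b_i)\}_{j\in\omega}$ inconsistent. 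Together with $c\vDash p$, this is an $\inp^{3}$-pattern of depth $\kappa$ in $p$, contradicting $\bdn^{3}(p)<\kappa$.

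Conversely, assume $\bdn^{3}(p)\geq\kappa$ and take an $\inp^{3}$-pattern $(\varphi_i,\bar{a}_i,b_i)_{i\in\kappa}$ in $p$ with $(\bar{a}_i)_{i\in\kappa}$ mutually $A$-indiscernible and $b_i\subseteq\bar{a}_{\neq i}$. Let $c\vDash p\cup\{\varphi_i(x,a_{i0},b_i)\}_{i\in\kappa}$. For any $i$, if (b) gave a suitable $\bar{a}_i'$, then $a_{i0}'=a_{i0}$ and $\bar{a}_i'\equiv_{\bar{a}_{\neq i}A}\bar{a}_i$ forces $\{\varphi_i(x,a_{ij}',b_i)\}_{j\in\omega}$ to be inconsistent (since $b_i\subseteq\bar{a}_{\neq i}$), while the indiscernibility of $\bar{a}_i'$ over $c\bar{a}_{\neq i}A$ together with $\vDash\varphi_i(c,a_{i0}',b_i)$ forces $\vDash\varphi_i(c,a_{ij}',b_i)$ for all $j$, a contradiction. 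So (b) fails on this data, proving (b)$\Rightarrow$(a).

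Part (2) is proved in exactly the same way, using $B=\bar{a}_{<i}A$ so that the resulting parameter $b_i$ lies in $\bar{a}_{<i}$ as required by the definition of $\inp^{2}$-pattern. I expect no serious obstacle; the only care needed is bookkeeping to ensure that the formula obtained from \autoref{lem: make indiscernible if consistent} produces a parameter $b_i$ in the correct subtuple ($\bar{a}_{\neq i}$ versus $\bar{a}_{<i}$) and that in the reverse direction the condition $a_{i0}'=a_{i0}$ is genuinely enforced by $\bar{a}_i'\equiv_{a_{i0}\ldots}\bar{a}_i$, so the realized instance at index $0$ coincides with the one guaranteed by the pattern.
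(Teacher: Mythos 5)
Your proof is correct and follows essentially the same route as the paper: both directions hinge on \autoref{lem: make indiscernible if consistent} applied with base $\bar{a}_{\neq i}A$ (resp.\ $\bar{a}_{<i}A$), extracting an $\inp^{3}$- (resp.\ $\inp^{2}$-) pattern by compactness when (b) fails, and arguing the converse directly from a witnessing pattern. The one cosmetic deviation is in handling the finitely many $A$-parameters that compactness throws into the witnessing formula: you fold them into $\varphi_i\in L(A)$, whereas the paper concatenates them onto the $y$-parameter (setting $a'_{ij}=a_{ij}d_i$) so that the $\varphi_i'$ remain parameter-free $L$-formulas, as the definition of an $\inp^{k}$-pattern strictly requires; either way one must note (as you can, since your rows are mutually indiscernible over $A$) that mutual indiscernibility of the modified rows is preserved.
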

\begin{proof}
  (i): (a) implies (b): Let $\left(\bar{a}_{i}\right)_{i\in\kappa}$
  mutually indiscernible over $A$ and $c\vDash p\left(x\right)$ be
  given. Define $p_{i}\left(x,a_{i0}\right)=\tp\left(c/a_{i0}\bar{a}_{\neq i}A\right)$.
  By \autoref{lem: make indiscernible if consistent} it is enough
  to show that $\bigcup_{j\in\omega}p_{i}\left(x,a_{ij}\right)$ is
  consistent for some $i\in\kappa$.

  Assume not, but then by compactness for each $i\in\kappa$ we have
  some $\varphi_{i}\left(x,a_{i0},b_{i}d_{i}\right)\in p_{i}\left(x,a_{i0}\right)$
  with $b_{i}\in\bar{a}_{\neq i}$ and $d_{i}\in A$ such that $\left\{ \varphi_{i}\left(x,a_{ij},b_{i}d_{i}\right)\right\} _{j\in\omega}$
  is inconsistent. Let $\varphi'_{i}\left(x,a_{ij}',b'_{i}\right)=\varphi_{i}\left(x,a_{ij},b_{i}d_{i}\right)$
  with $a_{ij}'=a_{ij}d_{i}$ and $b_{i}'=b_{i}$. It follows that $\left(\bar{a}_{i}'\right)_{i\in\kappa}$
  are mutually indiscernible, $c\vDash\left\{ \varphi'_{i}\left(x,a_{i0}',b_{i}'\right)\right\} _{i\in\kappa}\cup p\left(x\right)$
  and $\left\{ \varphi'_{i}\left(x,a_{ij}',b_{i}'\right)\right\} _{j\in\omega}$
  is inconsistent for each $i\in\kappa$, thus witnessing that $\bdn^{3}\left(p\right)\geq\kappa$
  --- a contradiction.

  (b) implies (a): Assume that $\bdn^{3}\left(p\right)\geq\kappa$,
  witnessed by an $\inp^{3}$-pattern $\left(\varphi_{i}\left(x,y_{i},z_{i}\right),\bar{a}_{i},b_{i}\right)_{i\in\kappa}$
  in $p\left(x\right)$. Let $c\vDash\left\{ \varphi_{i}\left(x,a_{i0},b_{i}\right)\right\} _{i\in\kappa}$
  and take $A=\emptyset$. It is then easy to check that (2) fails.

  (ii): Similar.
\end{proof}

\providecommand{\bysame}{\leavevmode\hbox to3em{\hrulefill}\thinspace}


\begin{thebibliography}{{Ben}03}

\bibitem[Adla]{Adler:IntroductionToDependent}
Hans \bgroup\scshape{}Adler\egroup{},
  \href{http://www.logic.univie.ac.at/~adler/docs/nip.pdf} {\emph{An
  introduction to theories without the independence property}}, Archive for
  Mathematical Logic, to appear.

\bibitem[Adlb]{Adler:Chain}
\bysame, \emph{Pre-independence relations}, preprint.

\bibitem[Adlc]{Adler:Burden}
\bysame, \href{http://www.logic.univie.ac.at/~adler/docs/strong.pdf}
  {\emph{Strong theories, burden, and weight}}, preprint.

\bibitem[Adl09]{Adler:ThornForking}
\bysame, \emph{Thorn-forking as local forking}, Journal of Mathematical Logic
  \textbf{9} (2009), no.~1, 21--38,
  \href{http://dx.doi.org/10.1142/S0219061309000823}{doi:10.1142/S0219061309000823}.

\bibitem[{Ben}03]{BenYaacov:SimplicityInCats}
Itaï \bgroup\scshape{}{Ben Yaacov}\egroup{},
  \href{http://math.univ-lyon1.fr/~begnac/articles/catsim.pdf}
  {\emph{Simplicity in compact abstract theories}}, Journal of Mathematical
  Logic \textbf{3} (2003), no.~2, 163--191,
  \href{http://dx.doi.org/10.1142/S0219061303000297}{doi:10.1142/S0219061303000297}.

\bibitem[BPV03]{BenYaacov-Pillay-Vassiliev:LovelyPairs}
Itaï \bgroup\scshape{}{Ben Yaacov}\egroup{}, Anand
  \bgroup\scshape{}Pillay\egroup{}, and Evgueni
  \bgroup\scshape{}Vassiliev\egroup{},
  \href{http://math.univ-lyon1.fr/~begnac/articles/pairs.pdf} {\emph{Lovely
  pairs of models}}, Annals of Pure and Applied Logic \textbf{122} (2003),
  no.~1-3, 235--261,
  \href{http://dx.doi.org/10.1016/S0168-0072(03)00018-6}{doi:10.1016/S0168-0072(03)00018-6}.

\bibitem[Cas03]{Casanovas:Dividing}
Enrique \bgroup\scshape{}Casanovas\egroup{}, \emph{Dividing and chain
  conditions}, Archive for Mathematical Logic \textbf{42} (2003), no.~8,
  815--819,
  \href{http://dx.doi.org/10.1007/s00153-003-0192-0}{doi:10.1007/s00153-003-0192-0}.

\bibitem[Che]{Chernikov:NTP2}
Artem \bgroup\scshape{}Chernikov\egroup{}, \emph{Theories without the tree
  property of the second kind}, reprint,
  \href{http://arxiv.org/abs/1204.0832}{arXiv:1204.0832}.

\bibitem[CK12]{Chernikov-Kaplan:ForkingDividingNTP2}
Artem \bgroup\scshape{}Chernikov\egroup{} and Itay
  \bgroup\scshape{}Kaplan\egroup{}, \emph{Forking and dividing in {NTP}$_2$
  theories}, Journal of Symbolic Logic \textbf{77} (2012), no.~1, 1--20,
  \href{http://arxiv.org/abs/0906.2806}{arXiv:0906.2806}.

\bibitem[CKS12]{Chernikov-Kaplan-Shelah:NonForkingSpectra}
Artem \bgroup\scshape{}Chernikov\egroup{}, Itay
  \bgroup\scshape{}Kaplan\egroup{}, and Saharon
  \bgroup\scshape{}Shelah\egroup{}, \emph{On non-forking spectra}, preprint,
  2012, \href{http://arxiv.org/abs/1205.3101}{arXiv:1205.3101}.

\bibitem[CLPZ01]{Casanovas-Lascar-Pillay-Ziegler:GaloisGroups}
Enrique \bgroup\scshape{}Casanovas\egroup{}, Daniel
  \bgroup\scshape{}Lascar\egroup{}, Anand \bgroup\scshape{}Pillay\egroup{}, and
  Martin \bgroup\scshape{}Ziegler\egroup{}, \emph{Galois groups of first order
  theories}, Journal of Mathematical Logic \textbf{1} (2001), no.~2, 305--319,
  \href{http://dx.doi.org/10.1142/S0219061301000119}{doi:10.1142/S0219061301000119}.

\bibitem[Dol04]{Dolich:WeakDividing}
Alfred \bgroup\scshape{}Dolich\egroup{}, \emph{Weak dividing, chain conditions,
  and simplicity}, Archive for Mathematical Logic \textbf{43} (2004), no.~2,
  265--283,
  \href{http://dx.doi.org/10.1007/s00153-003-0176-0}{doi:10.1007/s00153-003-0176-0}.

\bibitem[GIL02]{Grossberg-Iovino-Lessman:SimplePrimer}
Rami \bgroup\scshape{}Grossberg\egroup{}, Jos{\'e}
  \bgroup\scshape{}Iovino\egroup{}, and Olivier
  \bgroup\scshape{}Lessmann\egroup{}, \emph{A primer of simple theories},
  Archive for Mathematical Logic \textbf{41} (2002), no.~6, 541--580,
  \href{http://dx.doi.org/10.1007/s001530100126}{doi:10.1007/s001530100126}.

\bibitem[HP11]{Hrushovski-Pillay:NIPInvariantMeasures}
Ehud \bgroup\scshape{}Hrushovski\egroup{} and Anand
  \bgroup\scshape{}Pillay\egroup{}, \emph{On {NIP} and invariant measures},
  Journal of the European Mathematical Society (JEMS) \textbf{13} (2011),
  no.~4, 1005--1061,
  \href{http://dx.doi.org/10.4171/JEMS/274}{doi:10.4171/JEMS/274}.

\bibitem[Hru12]{Hrushovski:ApproximateSubgroups}
Ehud \bgroup\scshape{}Hrushovski\egroup{}, \emph{Stable group theory and
  approximate subgroups}, Journal of the American Mathematical Society
  \textbf{25} (2012), no.~1, 189--243,
  \href{http://dx.doi.org/10.1090/S0894-0347-2011-00708-X}{doi:10.1090/S0894-0347-2011-00708-X}.

\bibitem[HZ96]{Hrushovski-Zilber:ZariskiGeometries}
Ehud \bgroup\scshape{}Hrushovski\egroup{} and Boris
  \bgroup\scshape{}Zilber\egroup{}, \emph{Zariski geometries}, Journal of the
  American Mathematical Society \textbf{9} (1996), no.~1, 1--56,
  \href{http://dx.doi.org/10.1090/S0894-0347-96-00180-4}{doi:10.1090/S0894-0347-96-00180-4}.

\bibitem[Kim96]{Kim:PhD}
Byunghan \bgroup\scshape{}Kim\egroup{},
  \href{http://gateway.proquest.com/openurl?url_ver=Z39.88-2004&rft_val_fmt=info:ofi/fmt:kev:mtx:dissertation&res_dat=xri:pqdiss&rft_dat=xri:pqdiss:9629800}
  {\emph{Simple first order theories}}, Ph.D. thesis, University of Notre Dame,
  1996, p.~96.

\bibitem[KOU]{Kaplan-Onshuus-Usvyatsov:dpRank}
Itay \bgroup\scshape{}Kaplan\egroup{}, Alf \bgroup\scshape{}Onshuus\egroup{},
  and Alexander \bgroup\scshape{}Usvyatsov\egroup{}, \emph{Additivity of the
  dp-rank}, Transactions of the American Mathematical Society, to appear,
  \href{http://arxiv.org/abs/1109.1601}{arXiv:1109.1601}.

\bibitem[KS12]{Kaplan-Shelah:ChainConditionsInDependentGroups}
Itay \bgroup\scshape{}Kaplan\egroup{} and Saharon
  \bgroup\scshape{}Shelah\egroup{}, \emph{Chain conditions in dependent
  groups}, preprint, 2012,
  \href{http://arxiv.org/abs/1112.0807}{arXiv:1112.0807}.

\bibitem[KU]{Kaplan-Usvyatsov:StrictIndependence}
Itay \bgroup\scshape{}Kaplan\egroup{} and Alexander
  \bgroup\scshape{}Usvyatsov\egroup{}, \emph{Strict independence in dependent
  theories}, In preparation.

\bibitem[Les00]{Lessmann:CountingPartialTypes}
Olivier \bgroup\scshape{}Lessmann\egroup{}, \emph{Counting partial types in
  simple theories}, Colloquium Mathematicum \textbf{83} (2000), no.~2,
  201--208.

\bibitem[OU11]{Onshuus-Usvyatsov:dpMinimality}
Alf \bgroup\scshape{}Onshuus\egroup{} and Alexander
  \bgroup\scshape{}Usvyatsov\egroup{}, \emph{On dp-minimality, strong
  dependence and weight}, Journal of Symbolic Logic \textbf{76} (2011), no.~3,
  737--758,
  \href{http://dx.doi.org/10.2178/jsl/1309952519}{doi:10.2178/jsl/1309952519}.

\bibitem[Poi85]{Poizat:Cours}
Bruno \bgroup\scshape{}Poizat\egroup{}, \emph{Cours de théorie des modèles},
  Nur al-Mantiq wal-Ma'rifah, Lyon, 1985, Une introduction à la logique
  mathématique contemporaine.

\bibitem[She]{Shelah:StronglyDependent}
Saharon \bgroup\scshape{}Shelah\egroup{}, \emph{Strongly dependent theories},
  preprint, \href{http://arxiv.org/abs/math.LO/0504197}{arXiv:math.LO/0504197}.

\bibitem[She80]{Shelah:SimpleUnstableTheories}
\bysame, \emph{Simple unstable theories}, Annals of Mathematical Logic
  \textbf{19} (1980), no.~3, 177--203,
  \href{http://dx.doi.org/10.1016/0003-4843(80)90009-1}{doi:10.1016/0003-4843(80)90009-1}.

\bibitem[She90]{Shelah:ClassificationTheory}
\bysame, \emph{Classification theory and the number of nonisomorphic models},
  second ed., Studies in Logic and the Foundations of Mathematics, vol.~92,
  North-Holland Publishing Co., Amsterdam, 1990.

\bibitem[She09]{Shelah:DependentContinued}
\bysame, \emph{Dependent first order theories, continued}, Israel Journal of
  Mathematics \textbf{173} (2009), 1--60,
  \href{http://dx.doi.org/10.1007/s11856-009-0082-1}{doi:10.1007/s11856-009-0082-1}.

\bibitem[Wag00]{Wagner:SimpleTheories}
Frank~O. \bgroup\scshape{}Wagner\egroup{}, \emph{Simple theories}, Kluwer
  Academic Publishers, 2000.

\end{thebibliography}
\end{document}